\pgfplotsset{compat=newest}
\definecolor{c595959}{RGB}{89,89,89}
\definecolor{c5a5a5a}{RGB}{90,90,90}
\setlist[enumerate]{itemsep=0mm}
\theoremstyle{plain}
\declaretheorem[title=Theorem, parent=section]{theorem}
\declaretheorem[title=Lemma,sibling=theorem]{lemma}
\declaretheorem[title=Proposition,sibling=theorem]{proposition}
\declaretheorem[title=Corollary,sibling=theorem]{corollary}
\theoremstyle{definition}
\declaretheorem[title=Definition,sibling=theorem]{definition}
\declaretheorem[title=Remark,sibling=theorem]{remark}
\declaretheorem[title=Remark, numbered=no]{remark*}
\declaretheorem[title=Assumption, numbered=no]{assumption*}
\numberwithin{equation}{section}
\newcommand{\N}{\mathds{N}}
\newcommand{\R}{\mathds{R}}
\def\hmath$#1${\texorpdfstring{{\rmfamily\textit{#1}}}{#1}}
\newcommand{\cB}{\mathcal{B}}
\newcommand{\cW}{\mathcal{W}}
\newcommand{\eps}{\varepsilon}
\newcommand{\loc}{\mathrm{loc}}
\newcommand{\BIGOP}[1]
{
	\mathop{\mathchoice%
		{\raise-0.22em\hbox{\huge $#1$}}%
		{\raise-0.05em\hbox{\Large $#1$}}{\hbox{\large $#1$}}{#1}}}
\def\XXint#1#2#3{{\setbox0=\hbox{$#1{#2#3}{\int}$}
		\vcenter{\hbox{$#2#3$}}\kern-.5\wd0}}
\newcommand{\BIGboxplus}{\mathop{\mathchoice%
		{\raise-0.35em\hbox{\huge $\boxplus$}}%
		{\raise-0.15em\hbox{\Large $\boxplus$}}{\hbox{\large $\boxplus$}}{\boxplus}}}
\DeclareMathOperator{\lin}{lin}
\DeclareMathOperator{\dist}{dist}
\DeclareMathOperator{\Id}{Id}
\DeclareMathOperator{\diam}{diam}
\DeclareMathOperator{\supp}{supp}
\renewcommand{\d}{\textnormal{d}}
\newcommand{\1}{{\mathbbm{1}}}
\newcommand{\norm}[1]{\left\lVert#1\right\rVert} 
\newcommand{\abs}[1]{\ensuremath{\left\vert#1\right\vert}} 
\newcommand{\ie}{i.e.\,}
\newcommand{\eg}{e.g.\,}
\newcommand{\distw}[2]{\dist(#1, #2)} 
\providecommand\@dotsep{5}
\def\listtodoname{List of Todos}
\def\listoftodos{\@starttoc{tdo}\listtodoname}
\begin{document}
	\allowdisplaybreaks
	\title{The Dirichlet Problem for L\'{e}vy-stable operators with $L^2$-data}
	
		\author{Florian Grube, Thorben Hensiek, Waldemar Schefer}

	\address{Fakultät für Mathematik, Universität Bielefeld, Postfach 10 01 31, 33501 Bielefeld, Germany}
	\email{fgrube@math.uni-bielefeld.de}
	\email{thensiek@math.uni-bielefeld.de}
	\email{wschefer@math.uni-bielefeld.de}
	
	\makeatletter
	\@namedef{subjclassname@2020}{%
		\textup{2020} Mathematics Subject Classification}
	\makeatother
	
	\subjclass[2020]{35S15, 35J25, 46E35, 47G20, 60G52}
	
	\keywords{L\'{e}vy-stable operators, Sobolev regularity, Nonlocal equations, Dirichlet problem}

	\begin{abstract} 
		We prove Sobolev regularity for distributional solutions to the Dirichlet problem for generators of $2s$-stable processes and exterior data, inhomogeneity in weighted $L^2$-spaces. This class of operators includes the fractional Laplacian. For these rough exterior data the theory of weak variational solutions is not applicable. Our regularity estimate is robust in the limit $s\to 1-$ which allows us to recover the local theory. 
	\end{abstract}

	\maketitle
	
	\section{Introduction}\label{sec:intro}
	The study of the existence, uniqueness, and regularity of harmonic functions in domains with prescribed boundary values has a long history. In the late 1970s and 1980s significant progress was made on the Dirichlet problem with irregular boundary data. It is well known that the problem \begin{align}\label{eq:dirichlet_laplace}
	\begin{split}
	-\Delta u &= 0 \text{ in } \Omega,\\
	u &=g \text{ on } \partial\Omega,
	\end{split}
	\end{align}
	admits a unique solution $u$ in $H^{1/2}(\Omega)$ for $g$ in $L^2(\partial\Omega)$ and a sufficiently regular domain $\Omega\subset \R^d$, see e.g.\ \cite{KenigDirichlet}. One goal in this field of study is to lower the assumptions on the regularity of the boundary $\partial \Omega$, cf. our overview in \autoref{sec:literature}. A major difficulty in studying \eqref{eq:dirichlet_laplace} in rough domains is the lack of Poisson kernel estimates near the boundary of $\Omega$. In addition, the low regularity of the boundary datum $g$ prohibits the use of a variational ansatz in $H^1(\Omega)$ since the classical trace and extension theorems restrict us naturally to boundary data in $H^{1/2}(\partial \Omega)$. \smallskip 
	
	In the last decades, nonlocal operators enjoyed notable attention. Naturally motivated by the Courrège theorem, see \cite{Co65}, which characterizes linear operators that satisfy the positive maximum principle, these operators appear in probability theory, fluid mechanics and mathematical physics. In this work, we study the subclass of $2s$-stable, nondegenerate integro-differential operators
	\begin{align}
		A_s u(x):= \text{p.v.} \int\limits_{\mathbb{R}^{d}} (u(x)-u(x+h)) \,\nu_s(\d h) \quad \text{for $u:\R^d \to \R$,} \label{eq:definition_A_s}
	\end{align}
	with $s\in (0,1)$, a measure $\mu$ on the unit sphere $\mathbb S^{d-1}$, and the L{\'e}vy measure given in polar coordinates via
	\begin{align}
		\nu_s(U):= (1-s) \int\limits_{\mathbb{R}}\int\limits_{\mathbb S^{d-1}} \1_U(r\theta) \abs{r}^{-1-2s}  \mu(\d\theta) \d r \quad \text{for $U\in \mathcal{B}(\mathbb{R}^d)$.}\label{eq:levy_measure}
	\end{align}	
	We assume that the measure $\mu$ is finite, i.e.\ there exists $\Lambda>0$ such that $\mu(S^{d-1})\le \Lambda$, and the following nondegeneracy condition: There exist $0<\lambda\le \Lambda$ such that
	\begin{align}\label{eq:ellip_conditions}
		\lambda\le \inf\limits_{\omega\in \mathbb S^{d-1}} \int\limits_{\mathbb S^{d-1}}\abs{\omega\cdot \theta}^{2s} \mu(\d\theta).
	\end{align}
	The measure $\mu$ weights the directions in which the operator $A_s$ acts. In particular, the assumption \eqref{eq:ellip_conditions} ensures that $\mu$ is not supported on a hyperplane.
	
	\smallskip
	
	The class of $2s$-stable operators includes the fractional Laplacian $(-\Delta)^s$ which corresponds to $\mu$ being a multiple of the surface measure on the sphere. The assumption \eqref{eq:ellip_conditions} yields the comparability of the Fourier symbol of $A_s$ to that of the fractional Laplacian, \ie $\abs{\xi}^{2s}$. 
	
	\smallskip 
	
	These operators are translation invariant, symmetric and exhibit the scaling behavior $A_s [u(\lambda \cdot)](x)= \lambda^{2s} A_s [ u](\lambda x)$, $x\in \R^d, \lambda>0$. Furthermore, elliptic second-order operators $a^{ij}\partial_{ij}$ with constant coefficients $a^{ij}$ appear in the limit of $A_s$ as $s\to 1-$, cf.\ \cite{FKV20}. 
	
	\smallskip

	Another motivation to consider operators of the form \eqref{eq:definition_A_s} arises from the field of stochastic processes. More precisely, the operators $A_s$ under the assumption \eqref{eq:ellip_conditions} are the generators of symmetric $2s$-stable nondegenerate processes, see \eg \cite{Sat99}.
	
	In analogy to the classical problem \eqref{eq:dirichlet_laplace}, we study the exterior data problems
	\begin{align}\label{eq:main_equation}
		\begin{split}
			A_s u &= f \text{ in } \Omega,\\
			u &= g \text{ on }\Omega^c,
		\end{split}
	\end{align}
	for given $f:\Omega \to \R$ and $g:\Omega^c\to \R$ and a bounded $C^{1,\alpha}$-domain $\Omega \subset \R^d$. More precisely, we study the existence, uniqueness and Sobolev-regularity of distributional solutions to the equation \eqref{eq:main_equation}. We say that a function $u:\R^d\to \R$ is a distributional solution to \eqref{eq:main_equation} if it satisfies
	\begin{equation}\label{eq:distributional_A_s}
		\int\limits_{\R^d} u(x)\,A_s (\eta)(x)\d x = \int\limits_{\Omega} f(x)\, \eta(x)\d x \quad \text{for all }\eta \in C_c^\infty(\Omega)
	\end{equation}
	and $u=g$ on $\Omega^c$, as well as some appropriate integrability properties.	
	
	\smallskip 
	
	Similar difficulties as in the classical setup \eqref{eq:dirichlet_laplace}, described above, occur. In particular, boundary estimates of the corresponding Green and Poisson kernels for the linear operators $A_s$ in general $C^{1,\alpha}$-domains are not known. Furthermore, exterior data in some weighted $L^2$-space are not sufficient to allow for a variational approach, see \eg \cite{GrHe22b}.
	
	\smallskip
	
	The goal of this work is twofold. Firstly, we want to provide a proof of the existence, uniqueness and Sobolev-regularity of distributional solutions to \eqref{eq:main_equation} for $f,g$ in some weighted $L^2$-spaces that is both short and easy to generalize. In particular, we will avoid any potential theory. Secondly, we want to provide a regularity theory that is continuous in the order of differentiation $s$, \ie the classical result for second-order operators, as described above, is retrieved in the limit $s\to 1-$. This entails a careful study of all estimates in terms of $s$. Note that the data $g$ in the classical problem \eqref{eq:dirichlet_laplace} is given on the topological boundary of $\Omega$ which is in align with the locality of the Laplacian. In turn, the nonlocal problem \eqref{eq:main_equation} requires exterior data on the complement of $\Omega$. Thus, an appropriate choice of exterior data $g$ is essential to achieve a ‘continuous’ theory. For a big subclass of the operators under consideration, like the fractional Laplacian $(-\Delta)^s$, a fitting weighted $L^2$-space is already introduced in the previous works \cite{GrHe22b} and \cite{GrKa23} which deal with robust nonlocal trace and extension results.
	
	\smallskip 
	
	\subsection{The main result} Throughout this work, we assume $\Omega\subset \R^d$ to be a bounded $C^{1,\alpha}$-domain for some $\alpha \in (0,1)$. In order to study the well-posedness of the distributional problem \eqref{eq:main_equation}, it is essential that the integrals in \eqref{eq:distributional_A_s} exist. In particular, a distributional solution $u$ needs to be integrable against $A_s (\eta)$ for any $\eta \in C_c^\infty(\Omega)$. A corresponding tail weight was introduced in \cite{GrHe22a}: For any $s\in (0,1)$ we define
	\begin{equation}\label{eq:nustar}
	\nu_s^\star(x):= (1-s)\int\limits_{\mathbb{R}}\int\limits_{\mathbb S^{d-1}}  \1_{\Omega} (x+r\theta) (1+\abs{r})^{-1-2s}   \mu(\d\theta)\d r.
	\end{equation}
	This weight decays like $A_s(\eta)$ at infinity and is locally bounded. Thus, $u\in L^1(\R^d; \nu_s^\star)$ is a sufficient and necessary condition to study distributional solutions of \eqref{eq:main_equation}.  Instead of $\nu_s^\star$, many authors use $\widetilde{\nu}^\star_s(x):=(1-s)/(1+\abs{x})^{d+2s}$. It was proved in \cite[Lemma A.1]{GrHe22a} that in some cases, including the fractional Laplacian, this weight is comparable to $\nu_s^\star$. We want to emphasize that the choice $\widetilde{\nu}_s^\star$ is not optimal whenever the measure $\mu$ is not supported on the whole sphere. For example the sum of Dirac measures on an orthogonal basis of $\R^d$, $\mu= \sum_{i=1}^{d} \delta_{e_{i}}$, corresponds to the operator which is a sum of one-dimensional fractional Laplacians $\sum_{i=1}^d (-\partial_i^2)^s$ in all coordinate directions and the measure $\nu_s^\star(x)\d x$ is supported only in a neighborhood of the coordinate axes depending on $\Omega$, i.e.\ $\supp \nu_s^\star= \{ x+re_i\,|\, x\in \Omega, r\in \R, i\in \{1,\dots, d\} \}$. 
	
	\smallskip
	
	Next, we introduce the weighted $L^2$-spaces for the exterior data $g$ in the problem \eqref{eq:main_equation}. From the above discussion, it is clear that the weight needs to behave like $\nu_s^\star$ at infinity. Additionally, one goal of this work is to provide a theory that is continuous in terms of $s$. Thereby, we need to retrieve the space $L^2(\partial \Omega)$ in the limit $s\to 1-$. In the case of the fractional Laplacian, this was achieved in \cite{GrHe22b} and \cite{GrKa23} where trace and extension results for a variational approach was proved. In the following definition we introduce a weight in the spirit of \cite{GrHe22b} that is adapted to the operators $A_s$.
	
	\begin{definition}\label{def:mu_s}
		Let $s\in (0,1)$. The measure $\tau_s$ on $\cB(\R^d)$ is defined by $\tau_s(\d x):= \tau_s(x)\d x$ where
		\begin{equation}\label{eq:def_tau_s}
		\tau_s(x):= \frac{1-s}{d_x^s}\1_{\Omega^1\cap \supp(\nu_s^\star)}(x) +\nu_s^\star(x),
		\end{equation}
		 $\Omega^1:= \{ x\in \Omega^c\,|\, d_x<1 \}$, and $d_x := \distw{x}{\partial\Omega}$.
	\end{definition}

	Now, we state our main result. 
	
	\begin{theorem}\label{th:main_result}
		Let $\Omega\subset \R^d$ be a bounded $C^{1,\alpha}$-domain, $\alpha \in (0,1)$, $s_\star\in (0,1)$. For any $s\in (s_\star,1)$, $f\in L^2(\Omega; d_x^{2s})$, $g \in L^2(\Omega^c;\tau_s)$, there exists a unique distributional solution $u\in H^{s/2}(\Omega) \cap L^2(\R^d; \nu_s^\star)$ to \eqref{eq:main_equation}. Furthermore, the estimate 
		\begin{align}\label{eq:main_estimate_theorem}
		\begin{split}
		\norm{u}_{H^{s/2}(\Omega)}^2&\le \norm{u}_{L^2(\Omega)}^2 +C (1-s) \iint\limits_{\Omega\, \Omega} \big(d_x\wedge d_y\big)^s \frac{(u(x)-u(y))^2}{\abs{x-y}^{d+2s}} \d y \d x\\
		&\qquad \qquad\le C^2 \Big(\norm{f}_{L^2(\Omega;d_x^{2s})}^2+\norm{g}_{L^2(\Omega^c;\tau_s)}^2\Big)
		\end{split}
		\end{align} 
		holds for some constant $C=C(d,\Omega, \alpha, s_\star, \lambda, \Lambda)\ge 1$.
	\end{theorem}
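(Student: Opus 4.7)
I would reduce to the case of zero exterior data by constructing a suitable extension $G:\R^d\to\R$ of $g$ and solving the problem for $v:=u-G$ variationally in $H^s_\Omega := \{w\in H^s(\R^d) : w=0 \text{ a.e.\ on }\Omega^c\}$. Both pieces are then controlled by the data norms, the estimate for $u$ follows by superposition, existence comes from Lax--Milgram on $H^s_\Omega$, and uniqueness is handled by a duality argument.

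\textbf{Extension of $g$.} The main technical input is an $s$-robust extension $G$ with $G|_{\Omega^c}=g$ and
\begin{equation*}
\|G\|_{L^2(\Omega)}^2 + (1-s)\iint_{\R^d\times\R^d}(d_x\wedge d_y)^s\,(G(x)-G(y))^2\,\nu_s(\d(y-x))\d x \le C\|g\|_{L^2(\Omega^c;\tau_s)}^2,
\end{equation*}
for some $C=C(d,\Omega,\alpha,s_\star,\Lambda)$. This adapts the nonlocal trace/extension framework of \cite{GrHe22b,GrKa23} from the fractional Laplacian to the general $2s$-stable operator $A_s$ under \eqref{eq:ellip_conditions}. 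The choice of $\tau_s$ in Definition~\ref{def:mu_s} is engineered so that this bound is uniform in $s\in(s_\star,1)$ and reduces as $s\to 1-$ to the classical $L^2(\partial\Omega)$ trace/extension theorem.

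\textbf{Solving the reduced problem.} I would then seek $v\in H^s_\Omega$ satisfying $\cE_s(v,\varphi)=\int_\Omega f\varphi\,\d x - \cE_s(G,\varphi)$ for every $\varphi\in H^s_\Omega$, where $\cE_s$ is the symmetric bilinear form associated with $A_s$. Coercivity $\cE_s(\varphi,\varphi)\ge c\lambda(1-s)[\varphi]_{H^s(\R^d)}^2$ follows from the Fourier-symbol comparison granted by \eqref{eq:ellip_conditions}, cf.~\cite{FKV20}, while boundedness of the right-hand side on $H^s_\Omega$ uses Cauchy--Schwarz together with the $s$-robust Hardy inequality $\|\varphi/d_x^s\|_{L^2(\Omega)}^2 \le C(1-s)[\varphi]_{H^s(\R^d)}^2$ on $H^s_\Omega$ for the $f$-term, and the extension bound above for the $G$-term. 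Lax--Milgram then yields $v$ with
\begin{equation*}
\|v\|_{L^2(\Omega)}^2 + (1-s)[v]_{H^s(\R^d)}^2 \le C\big(\|f\|^2_{L^2(\Omega;d_x^{2s})} + \|g\|^2_{L^2(\Omega^c;\tau_s)}\big).
\end{equation*}

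\textbf{Assembly, uniqueness, and main obstacle.} The solution is $u := v + G$, which satisfies \eqref{eq:distributional_A_s} by construction. The second inequality in \eqref{eq:main_estimate_theorem} follows by splitting $u = v + G$, estimating the $v$-contribution via the boundedness of $d_x,d_y$ on $\Omega$ and the energy bound above, and the $G$-contribution directly by the extension bound. The first inequality — the $H^{s/2}(\Omega)$ bound by the $(d_x\wedge d_y)^s$-weighted seminorm — is a separate weighted interpolation/Hardy-type inequality, which I would prove via a dyadic Whitney decomposition near $\partial\Omega$. For uniqueness, given $w := u_1 - u_2 \in L^2(\R^d;\nu_s^\star)$ with $A_s w = 0$ distributionally and $w = 0$ on $\Omega^c$, I would, for arbitrary $h \in C_c^\infty(\Omega)$, solve $A_s\psi = h$ variationally in $H^s_\Omega$, approximate $\psi$ by $\psi_n \in C_c^\infty(\Omega)$, and pass to the limit in $\int w\, A_s\psi_n\,\d x = 0$ to conclude $\int_\Omega wh\,\d x = 0$ for all $h$, hence $w \equiv 0$. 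The principal obstacle I anticipate is the extension construction: obtaining $s$-uniform bounds in the anisotropic $(d_x\wedge d_y)^s$-weighted seminorm adapted to a possibly singular angular measure $\mu$ requires a boundary-layer analysis that genuinely goes beyond the isotropic treatment of \cite{GrHe22b,GrKa23}, while the weighted Hardy inequality producing the full $H^{s/2}$ norm is the secondary technical point.
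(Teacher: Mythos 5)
The proposal's central step is not applicable here, and this is precisely the obstruction the paper is designed to circumvent. You reduce to zero exterior data by extending $g$ to $G$ and then solving for $v=u-G$ variationally in $H^s_\Omega$ via Lax--Milgram, which requires the functional $\varphi\mapsto \int_\Omega f\varphi - \cE_s(G,\varphi)$ to be bounded on $H^s_\Omega$. But $\cE_s(G,\varphi)$ contains cross terms over $\Omega\times\Omega^c$, namely $-\int_\Omega \varphi(x)\int_{\Omega^c} g(y)\,\nu_s(\d(y-x))\,\d x$ (plus a $G|_\Omega$-dependent piece), and for $g$ merely in $L^2(\Omega^c;\tau_s)$ this functional is \emph{not} bounded on $H^s_\Omega$. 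Concretely, take $g(y)=\1_{\Omega^1}(y)\,d_y^{-1+s+\eps}$ with $\eps=(1-s)/2+\delta/2$ as in the paper's Optimality Remark: then $g\in L^2(\Omega^c;\tau_s)$, but for $x\in\Omega$ near $\partial\Omega$ the kernel integral $\int_{\Omega^c} g(y)\,\nu_s(\d(y-x))$ grows like $d_x^{-(1+3s)/2-\delta/2}$, and pairing with $\varphi\in H^s_\Omega$ (whose boundary decay is at best $d_x^s$ via the Hardy inequality) gives a divergent integral. This is exactly the paper's point that $L^2(\Omega^c;\tau_s)$ is strictly larger than the trace space $T^s(\Omega^c)$ of $V^s(\Omega|\R^d)$ for which extensions with $\cE_s$-bounds exist (\cite{GrHe22b,GrKa23}); the Introduction emphasizes that ``for these rough exterior data the theory of weak variational solutions is not applicable.'' Relatedly, your postulated extension bound over $\R^d\times\R^d$ cannot hold: its restriction to $\Omega^c\times\Omega^c$ already forces a weighted $H^s$-type seminorm of $g$ on $\Omega^c$ that is infinite for generic $g\in L^2(\Omega^c;\tau_s)$ (e.g.\ a characteristic function of a smooth set away from $\partial\Omega$ when $s>1/2$).

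The paper's route is fundamentally different and avoids any variational formulation. It introduces the auxiliary function $\phi$ solving $A_s\phi=1$ in $\Omega$, $\phi=0$ on $\Omega^c$, with $\phi\asymp d_x^s$, and uses the carr\'e du champ identity $\Gamma_s(u)=-A_s(u^2)+2uA_s u$ together with the nonlocal Gauss--Green formula and the bound $-N_s(\phi)\lesssim\tau_s$ on $\Omega^c$ to estimate $\int_\Omega d_x^s\,\Gamma_s(u)$ and $\|u\|_{L^2(\Omega)}$ directly (Lemmata 4.1--4.2). The $H^{s/2}$ control comes from the robust weighted Sobolev inequality (\autoref{lem:weighted_sobolev}) and the energy-comparison estimate (\autoref{lem:comparison_forms}, \autoref{cor:comparison_forms}), which you correctly anticipate should be proved via a Whitney decomposition. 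Existence for rough data is then obtained by first treating smooth $f,g$ (where classical solutions exist) and passing to the limit, and uniqueness is shown by the maximum principle of \cite{GrHe22a}, with the required boundary decay condition verified through the fractional Hardy inequality, rather than by the duality argument you sketch. You would need to replace the extension/Lax--Milgram step with an argument of this type to fill the gap.
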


	This result is new, even in the case of the fractional Laplacian on $C^{1,\alpha}$-domains. Only recently in \cite{KiRy23}, \autoref{th:main_result} was proved for $(-\Delta)^s$ on $C^{1,1}$-domains but the dependency of the constant in \eqref{eq:main_estimate_theorem} on $s$ is unclear. 
	\begin{remark}
		The first inequality in \eqref{eq:main_estimate_theorem} holds true for all measurable $u :\Omega \to \mathbb{R}$ for which the right-hand side is finite, see \autoref{lem:comparison_forms}. It is a fractional analogue of \cite[Theorem 4.1]{KenigDirichlet}. 
	\end{remark}
		
	\begin{remark}
		The proof of \autoref{th:main_result} reveals that
		\begin{equation*}
		(1-s)\int_\Omega  d_x^{s} \int_{\mathbb{R}} \int_{\mathbb S^{d-1}} \1_{\Omega^c}(x+r\theta)\frac{(u(x)-g(x+r\theta))^2}{\abs{r}^{1+2s}} \d \mu(\theta) \d r \d x
		\end{equation*}
		is bounded by the right-hand side of \eqref{eq:main_estimate_theorem} for the solution $u\in H^{s/2}(\Omega)$. This term describes how the solution $u$ in $\Omega$ meets the exterior data $g$ at the boundary $\partial \Omega$. This is a purely nonlocal phenomenon since this term vanishes in the limit $s\to 1-$. 
	\end{remark}
	
	\begin{remark}[Optimality] Let $\Omega$ be any bounded $C^{1,1}$-domain, $g(x):= \1_{\Omega^1}(x) d_x^{-1+s+\varepsilon}$, and $u(x):= \int_{\Omega^c} g(y)P_s(x,y) \d y$, where $P_s$ is the Poisson kernel related to the fractional Laplacian. Then $u$ solves $(-\Delta)^s u = 0$ in $\Omega$ and $u = g$ on $\Omega^c$. The exterior data $g$ is in $L^2(\Omega^c;\tau_s)$ if and only if $\varepsilon > (1-s)/2$. A short calculation, using Poisson kernel estimates, yields\begin{equation*}
		\int_{\Omega} \frac{u(x)^2}{d_x^s} \d x \ge c \int_{\Omega} d_x^{-2+s+2\varepsilon} \d x.
		\end{equation*}
		The right-hand side is finite if and only if $\varepsilon > (1-s)/2$. \smallskip
		
		If we pick $\varepsilon \le  (1-s)/2$, then $g$ is not in $L^2(\Omega^c;\tau_s)$ and Hardy's inequality implies that the solution $u$ does not belong to $H^{s/2}(\Omega)$, see \cite[Theorem 1.1 (17)]{Dyd04} or \cite[Theorem 2.3]{ChSo03}. \smallskip
		
		If we choose $\varepsilon = (1-s)/2 + \delta/2$ for $\delta >0$, then $g \in L^2(\Omega^c;\tau_s)$, but 
		\begin{equation*}
		\int_{\Omega} \frac{u(x)^2}{d_x^{s+2\delta}} \d x = \infty.
		\end{equation*}
		Hence, $u \in H^{s/2}(\Omega)$ by \autoref{th:main_result} but $u \notin H^{s/2 +\delta}(\Omega)$. 
	\end{remark}
	
	\begin{remark}
		Under stronger assumptions on the exterior data $g$, the variational approach implies the $H^s(\Omega)$-regularity. This approach requires an appropriate function space $V^s(\Omega|\mathbb{R}^d)$, which was first introduced in \cite{FKV15}. An optimal choice of exterior data requires the study of trace and extension results, done in \cite{DyKa19}, \cite{BGPR20}, \cite{GrHe22b} and \cite{GrKa23}. Robust estimates like \eqref{eq:main_estimate_theorem} were proved in the latter two articles.
	\end{remark}
	
	The robustness of the constant in \eqref{eq:main_estimate_theorem} in terms of $s$ allows us to retrieve the classical $H^{1/2}$-regularity result for the problem \eqref{eq:dirichlet_laplace} in the limit $s\to 1-$. This follows from a direct application of \autoref{th:main_result} and an appropriate approximation, see \autoref{th:nonlocalTolocal}.
	
	\subsection{Strategy of the Proof}\label{sec:idea_proof}  In the following, we sketch the proof of \autoref{th:main_result} in the special case of the fractional Laplacian. The first step is to prove a weighted Sobolev embedding
	\begin{equation}\label{eq:first_step_proof_sketch}
		\norm{u}_{H^{s/2}(\Omega)}^2 \le c_1\big( \norm{u}_{L^2(\Omega)}^2 + \int\limits_{\Omega} d_x^s \Gamma_s(u)(x) \d x \big),
	\end{equation}
	see \autoref{lem:weighted_sobolev}. Here $\Gamma_s$ is the carr\'{e} du champ related to the fractional Laplacian, see \eqref{eq:carre_du_champ}. It is the symmetric bilinear operator satisfying $(-\Delta)^s (u^2)= 2u (-\Delta)^s(u)- \Gamma_s(u)$. This first step is essentially the fractional analog of \cite[Theorem 4.1]{KenigDirichlet}. Introducing the carr\'{e} du champ is fundamental to gain access to the equation that $u$ satisfies.
	
	In the second step, we introduce a particular auxiliary function $\phi$ to the right-hand side of \eqref{eq:first_step_proof_sketch} in place of $d_x^s$. As such it is essential to analyze the boundary behavior of this function. We choose $\phi$ to be the classical solution to 
	\begin{align*}
		(-\Delta)^s \phi &=1 \quad \text{ in }\Omega,\\
		\hspace{12pt} \phi&= 0 \quad \text{ on }\Omega^c.
	\end{align*}
	This function satisfies $c_2 d_x^{s}\le \phi(x)\le c_3 d_x^s$ for $x\in \Omega$, see \cite[Proposition 2.6.6]{Ros23}. Together with the properties of the carr\'{e} du champ, the second term on the right-hand side of \eqref{eq:first_step_proof_sketch} can be estimated as follows:
	\begin{equation*}
		\int\limits_{\Omega} d_x^s \Gamma_s(u)(x) \d x\le c_2^{-1}\Big( 2\int\limits_{\Omega} \phi(x) u(x) f(x)\d x - \int\limits_{\Omega} \phi(x) (-\Delta)^s(u^2)(x)\d x\Big).
	\end{equation*}
	The first term is easily bounded from above using the Hölder inequality. A nonlocal Gauss-Green formula reveals that the second term equals
	\begin{align*}
		- \int\limits_{\Omega} \phi(x) (-\Delta)^s(u^2)(x)\d x= -\int\limits_{\Omega} u(x)^2 \d x - \int\limits_{\Omega^c} N_s\phi(x) g(x)^2 \d x.
	\end{align*}
	Here $N_s$ is the nonlocal normal derivative, see \eqref{eq:normal_derivative}. The first term is nonpositive and $-N_s\phi$ can be estimated from above by $\tau_s$, see \autoref{lem:normal_derivative_distance}. To show that $u$ is square integrable, we use a similar strategy after introducing the operator to the $L^2$-norm via
	\begin{align*}
		\int\limits_{\Omega} u(x)^2 \d x = \int\limits_{\Omega} (-\Delta)^s \phi(x) u(x)^2 \d x = \int\limits_{\Omega} \phi(x) (-\Delta)^s u(x)^2 \d x- \int\limits_{\Omega^c} N_s\phi(y) u(y)^2 \d x,
	\end{align*}
	see \autoref{lem:L2_bound}. \smallskip
	
	The main difficulty in adapting the proof for the fractional Laplacian to general $2s$-stable operators is to prove the analog of \eqref{eq:first_step_proof_sketch}, \ie to introduce the corresponding carré du champ. This is done in \autoref{sec:ineq_gen_stable}.
	
	\smallskip 
	
	On $C^{1,1}$-domains, it is not necessary to introduce the auxiliary function $\phi$ to prove the bound on $\int_{\Omega} d_x^s \Gamma_s(u)(x)\d x$, because the term $(-\Delta)^s(d_x^s)$ is bounded in that case. Note that, in contrast to the respective term with $\phi$, the sign of $(-\Delta)^s(d_x^s)$ in general domains is unknown and the integral cannot be dropped. For general nondegenerate stable operators $A_s$ the term $A_s (d_x^s)$ is not essentially bounded even in smooth domains $\Omega$, see \cite[Proposition 6.2]{Ros16}. 
	
	In the case of the Laplacian, the solution $\phi_1$ to $-\Delta \phi_1 =1$ in $\Omega$ and $\phi_1=0 $ on $ \partial \Omega$ was also used in \cite{Pe83} to prove a regularity result.

\subsection{Related literature}\label{sec:literature}
Existence theory and Sobolev regularity for weak variational elliptic and parabolic nonlocal problems was studied in \cite{FKV15} and a general class of integro-differential operators with possibly antisymmetric kernels were treated. The article \cite{KiRy23} contains regularity results for distributional solutions to the Dirichlet problem for the fractional Laplacian on $C^{1,1}$-domains. Exterior data and inhomogeneities in certain weighted Sobolev spaces of any order were treated. These weighted spaces were introduced in \cite{Lot00}. A careful study of these spaces shows that their result includes our result for the special case of the fractional Laplacian and $C^{1,1}$-domains. We extend their result to $C^{1, \alpha}$-domains with a robust estimate for $s\to 1-$. The approach in \cite{KiRy23} relies on a potential analysis of the Poisson kernel and Green function close to the boundary. Our approach allows us to consider a larger class of operators and more general domains. The article \cite{KiRy23} is an extension of \cite{CKR22}, where only zero exterior data were treated, and \cite{Gru14}, where $C^\infty$-domains and more regular boundary data were considered.

There is also interest in nonlocal problems with boundary data being only prescribed on $\partial\Omega$, cf.\ \cite{Gru15} and references therein.

An early contribution to boundary regularity of solutions to exterior data problems, like \eqref{eq:main_equation}, is \cite{Bog97}, where the author proved a boundary Harnack principle for the fractional Laplacian on Lipschitz domains. In \cite[Proposition 2.8, Proposition 2.9]{Sil07} Hölder regularity for distributional solutions to $(-\Delta)^su =f$ in $\mathbb{R}^d$ was proved. Interior Hölder regularity for weak $L$-harmonic functions, where $L$ is an integro-differential operator with a kernel comparable to the one of the fractional Laplacian, was studied in \cite[Theorem 1.1]{Kas09}. This was extended to operators with kernels which have critically low singularity and do not allow for standard scaling in \cite[Theorem 3]{KaMi17}. Optimal boundary regularity estimates for distributional solutions to $(-\Delta)^su = f$ in $\Omega$ and $u =0 $ on $\Omega^c$ were proved in \cite[Proposition 1.1]{RoSe14}. There, $\Omega$ is a Lipschitz domain satisfying the uniform outer ball condition. Moreover, higher Hölder regularity up to the boundary of the quotient $u/d_x^s$ on a $C^{1,1}$-domain was studied, see Theorem 1.2 in \cite{RoSe14}. This was extended to all generators of $2s$-stable processes in \cite[Theorem 1.2, Proposition 4.5]{RoSe16}. In \cite[Theorem 1.2, Theorem 1.5]{RoSe17}, boundary Hölder regularity of distributional solutions in $C^{1,\alpha}$-domains was proved. This result allows us to study the boundary behavior of our auxiliary function $\phi$, see \eqref{eq:phi_test_function}. The article \cite{DyKa20} contains Hölder regularity estimates for weak solutions to $Lu=f$ on $\mathbb{R}^d$ for more general integro-differential operators that are defined via measures, which are allowed to be singular. The estimate is robust in the order of differentiability. In \cite[Theorem 1.11]{DyKa20}, the comparability of the energy forms corresponding to singular operators like \eqref{eq:definition_A_s} to the energy form related to the fractional Laplacian was studied. We apply this result in \autoref{lem:comparison_forms}.

Integro-differential operator of variable order and Hölder regularity of solutions to the respective Dirichlet problem in balls was studied in \cite[Theorem 2.2]{BaKa05}. For Hölder regularity estimates of solutions to fully nonlinear nonlocal equations we refer to \cite{CaSi09,CaSi11a,CaSi11b}, and for boundary regularity to \cite{RoSe16}. In \cite{Ser15}, the author proved Hölder regularity of solutions to concave nonlocal fully nonlinear elliptic equations. Therein, the operators are allowed to have non-smooth kernels and do not have to be translation invariant. In \cite{CKW22}, robust Hölder regularity estimates for minimizers of nonlocal functionals with non-standard growth were shown. Interior and boundary regularity for distributional solutions to $Lu=f$ in $C^{1,\alpha}$-domains and zero exterior data are proved in \cite[Theorem 1.1, Theorem 1.2, Corollary 1.3]{DRSV22}, where $L$ is a translation invariant integro-differential operator with a nonsymmetric kernel. Additionally, Theorem 1.5 contains a new integration by parts formula for these kinds of operators. In \cite{KW22}, nonsymmetric non-translation invariant nonlocal operators were treated.

The convergence of fractional Sobolev spaces to classical Sobolev spaces was first established in \cite{BBM01}. In the article \cite{FKV20}, the nonlocal to local convergence in the sense of Mosco for quadratic forms, which appear in the variational study of nonlocal Dirichlet problems, was proved. This result was extended in \cite{Fog20} to a larger class of integro-differential operators. In \cite{Fog20} and \cite{FK22}, the convergence of weak solutions to nonlocal Dirichlet and Neumann problems to weak solutions of local Dirichlet and Neumann problems with nonzero boundary data was studied. The convergence of Neumann problems was extended in \cite{GrHe22b} to a more general class of exterior data. This was achieved by a careful study of the trace spaces related to the fractional-type Sobolev spaces which naturally appear in the setup of the weak formulation to problems like \eqref{eq:main_equation}. For trace and extension results in the $L^p$-setting for all $p \ge 1$ we refer to \cite{GrKa23}.

Green function and Poisson kernel estimates for the fractional Laplacian on $C^{1,1}$-domains were studied in \cite{ChSo98} and \cite{Che99}. In \cite{BoSz05}, a Harnack inequality and on-diagonal Green function estimates for a subclass of $2s$-stable L\'{e}vy processes on balls were proved. This result was extended in \cite{BoSz07} to a more general class of operators. In particular, the article contains a characterization of those operators in the class of $2s$-stable symmetric nonlocal operators that admit a Harnack inequality. For Dirichlet heat kernel estimates for cylindrical stable processes on $C^{1,1}$-domains we refer to \cite{CHZ23}.

Finally, we give a short overview of results concerning the Dirichlet problem for second-order operators like \eqref{eq:dirichlet_laplace}. To our knowledge the first boundary regularity result with $L^2$-boundary data was achieved in \cite{Mi76} who studied the problem in $C^2$-domains. This result was generalized in \cite{CT83} to a larger class of operators. The case of $C^{1,\alpha}$-domains and boundary data in $L^p$, $1<p<\infty$, was treated in \cite{Pe83}. In \cite{Li91}, the assumption on the boundary $\partial \Omega$  was reduced to $C^{1,\text{Dini}}$. In the influential articles \cite{Ken94, KenigDirichlet} the $H^{1/2}(\Omega)$-regularity result was proved in Lipschitz domains. The proof relies on fine estimates of the harmonic measure from \cite{DahlbergEstimatesHarmonicMeasure, DahlbergOnThePoissonIntegral} and weighted norm inequalities for the nontangential maximal function from \cite{Dah80}. 

\subsection{Outline}

In \autoref{sec:prelims}, we introduce the notation used throughout this work as well as the auxiliary function $\phi$. In \autoref{sec:ineq_gen_stable}, we prove the key estimates to relate the carré du champ, associated with the class of $2s$-stable operators, with the $H^{s\slash 2}$-seminorm. \autoref{sec:weighted_sobolev} contains a robust weighted Sobolev inequality which yields the $H^{s/2}$-regularity of distributional solutions. The proof of \autoref{th:main_result} is given in \autoref{sec:proof}. As an application, we retrieve the $H^{1/2}$-regularity of solutions to the local Dirichlet problem with boundary data in $L^2(\partial \Omega)$ by a nonlocal to local convergence of solutions in \autoref{sec:nonlocaltolocal}.
 
\subsection*{Acknowledgments}
Financial support by the German Research Foundation (GRK 2235 - 282638148) is gratefully acknowledged. We thank Moritz Kassmann for helpful discussions and Solveig Hepp for valuable comments on the manuscript. 

\section{Preliminaries}\label{sec:prelims}
In this section, we introduce the notation we use through this work. Additionally, we define basic objects like the carr\'{e} du champ and the nonlocal normal derivative. Lastly, we provide the existence of the auxiliary function $\phi$. 

\smallskip

Let us denote $d_x := \distw{x}{\partial\Omega}$ and $\Omega_\eps := \{ x\in \Omega\,|\, d_x>\eps \}$. We use $a \wedge b$ for $\min\{a,b\}$ as well as $a \vee b$ for $\max\{a,b\}$. With $a^+$ and $a^-$ we denote the positive respectively the negative part of $a$. If not mentioned differently then $\Omega$ is always a bounded $C^{1,\alpha}$-domain which means there exists a localization radius $r_0>0$ such that for any $z\in \partial \Omega$ there exists a rotation and translation $T_z:\R^d\to \R^d$ with $T_z(z)=0$ and a $C^{1,\alpha}$-continuous map $\phi_z:\R^{d-1}\to \R$ such that $T_z(\Omega\cap B_{r_0}(z))= \{ (y',y_d)\in B_{r_0}(0)\,|\, \phi_z(y')> y_d \}$. Since $\partial \Omega$ is compact, we find a uniform constant $L=L(\Omega)\ge 1$ such that $\norm{\phi}_{C^{1,\alpha}}\le L$. In the proofs we use small $c_1, c_2, c_3 , \dots$ to mark different constants and in the statements of the theorems we use the capital letter $C$. 

For $s\in (0,1)$ we write $H^{s}(\Omega)$ for the classical $L^2$-based Sobolev-Slobodeckij space on a domain $\Omega$. It is endowed with the norm
\begin{align*}
	\norm{u}_{H^s(\Omega)} := \Big(\norm{u}_{L^2(\Omega)}^2 + (1-s)\int_{\Omega} \int_{\Omega} \frac{\abs{u(x)-u(y)}^2}{\abs{x-y}^{d+2s}} \d x \d y \Big)^{1/2}.
\end{align*}
With $L^p(\Omega;w(x))$ we denote the $L^p$-space where integration is with respect to the weighted Lebesgue measure $w(x)\d x$. For all $\alpha>0$ we write $C^\alpha(\overline{\Omega}) = C^{\lfloor \alpha \rfloor,\alpha-\lfloor \alpha \rfloor }(\overline{\Omega})$ for the space of all Hölder continuous function on $\overline{\Omega}$ equipped with the norm $\norm{u}_{C^{\alpha}}= \norm{u}_{C^{\lfloor \alpha \rfloor}(\Omega)}+  \sup_{\abs{\beta}= \lfloor \alpha \rfloor } [\partial^\beta u]_{C^{\alpha-\lfloor \alpha \rfloor }(\Omega)}$. Here we write for $0< s <1$ \begin{equation*}
	[u]_{C^s(\Omega)}= \sup\limits_{x,y\in \Omega} \frac{\abs{u(x)-u(y)}}{\abs{x-y}^s}.
\end{equation*}
The set of locally Hölder continuous functions is denoted by $C_{\loc}^\alpha(\Omega)$. Furthermore, we define the carr\'{e} du champ operator related to $A_s$ as
\begin{equation}\label{eq:carre_du_champ}
	\Gamma_s(u)(x) := \int_{\mathbb{R}^d} (u(x)-u(x+h))^2 d\nu_s(h) \quad\text{for $x\in \mathbb R^d$.}
\end{equation}
The nonlocal normal derivative related to $A_s$ is defined via
	\begin{equation}\label{eq:normal_derivative}
		N_s(u) (x) := (1-s)\int_{\mathbb{R}} \int_{\mathbb S^{d-1}} \1_{\Omega}(x+r\theta) \frac{u(x)-u(x+r\theta)}{\abs{r}^{1+2s}} \d \mu(\theta) \d r \quad \text{for $x\in \overline{\Omega}^c$.}	
	\end{equation} 

This nonlocal normal derivative appeared first in \cite{DROV17}. We also refer to the work \cite{DGLZ12} who studied a similar operator in the context of peridynamics.

\begin{lemma}\label{lem:normal_derivative_distance}
	Let $\Omega\subset\mathbb R^d$ be a bounded open set. There exists a constant $C=C(\Omega, \Lambda)\ge 1$ such that for any $s\in (0,1)$
	\begin{align*}
		-N_s(d^s)(x)\le \frac{C}{s}\, \tau_s(x)\quad \text{for all $x\in \overline{\Omega}^c$.}	
	\end{align*}
\end{lemma}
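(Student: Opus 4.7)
My plan is to reduce the claim to two elementary scalar inequalities for $d_{x+r\theta}^s$ combined with a single splitting of the radial integral at the scale $|r|=1$, which precisely mirrors the two summands defining $\tau_s$. Starting from the explicit formula
\[
-N_s(d^s)(x)=(1-s)\int_{\mathbb{R}}\int_{\mathbb{S}^{d-1}} \1_\Omega(x+r\theta)\frac{d_{x+r\theta}^s-d_x^s}{|r|^{1+2s}}\,\mu(\d\theta)\,\d r
\]
for $x\in\overline\Omega^c$, I will first discard the (non-positive) contributions where $d_{x+r\theta}<d_x$, so that the numerator can be replaced by $d_{x+r\theta}^s$. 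The crucial geometric input is that whenever $x\in\overline\Omega^c$ and $x+r\theta\in\Omega$, the segment joining these points must cross $\partial\Omega$ at some parameter $t^\star\in[d_x,|r|]$, which gives $d_{x+r\theta}\le |r|-t^\star\le |r|$ and also the trivial $d_{x+r\theta}\le\diam(\Omega)$. I will use $d_{x+r\theta}^s\le |r|^s$ on short radii and $d_{x+r\theta}^s\le\diam(\Omega)^s$ on long ones.

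Next I split at $|r|=1$. On $\{|r|\le 1\}$ the factor $\1_\Omega(x+r\theta)$ forces $|r|\ge d_x$, hence the contribution is empty whenever $d_x\ge 1$; otherwise, using $d_{x+r\theta}^s\le |r|^s$ and $\mu(\mathbb S^{d-1})\le\Lambda$,
\[
(1-s)\int_{d_x\le|r|\le 1}\!\int_{\mathbb{S}^{d-1}} \1_\Omega(x+r\theta)\frac{|r|^s}{|r|^{1+2s}}\,\mu(\d\theta)\,\d r \le 2\Lambda(1-s)\int_{d_x}^{1}\!r^{-1-s}\,\d r\le \frac{C\Lambda}{s}\cdot\frac{1-s}{d_x^s}.
\]
This piece can only be nonzero when some ray from $x$ in a direction in $\supp\mu$ reaches $\Omega$ within distance $1$; by non-negativity of the integrand this forces $\nu_s^\star(x)>0$, so $x\in\Omega^1\cap\supp\nu_s^\star$ and the piece is absorbed into the first summand of $\tau_s$. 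On $\{|r|>1\}$, using $d_{x+r\theta}^s\le \diam(\Omega)^s$ and $|r|^{-1-2s}\le 2^{1+2s}(1+|r|)^{-1-2s}$,
\[
(1-s)\int_{|r|>1}\!\int_{\mathbb{S}^{d-1}} \1_\Omega(x+r\theta)\frac{\diam(\Omega)^s}{|r|^{1+2s}}\,\mu(\d\theta)\,\d r\le C(\Omega)\,\nu_s^\star(x),
\]
which is the second summand of $\tau_s$. Adding the two estimates gives $-N_s(d^s)(x)\le \tfrac{C}{s}\tau_s(x)$ with $C=C(\Omega,\Lambda)$; the case $\nu_s^\star(x)=0$ is trivial because then the entire integrand vanishes $\mu\otimes\d r$-a.e.

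The argument is largely bookkeeping once the segment-crossing bound is observed, so the only delicate point is correctly confining the singular piece $\frac{1-s}{d_x^s}$ to $\Omega^1\cap\supp\nu_s^\star$; this is automatic from non-negativity of the integrand, as explained above. The factor $1/s$ in the statement enters in exactly one place, namely the integral $\int_{d_x}^1 r^{-1-s}\,\d r\le \tfrac{1}{s\,d_x^s}$, and all remaining constants depend only on $\Omega$ and $\Lambda$, so the dependence of $C$ on $(\Omega,\Lambda)$ only—and the robustness as $s\to 1-$—is transparent.
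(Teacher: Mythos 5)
Your proof is correct, and the core ingredients coincide with the paper's: you drop the nonpositive $-d_x^s$ contribution, use the segment-crossing bound $d_{x+r\theta}\le|r|$, and derive the factor $1/s$ from $\int_{d_x}^{\cdot} r^{-1-s}\,\d r$. The decomposition differs slightly: the paper splits into the cases $d_x\ge 1$ (handled by citing an analogous calculation in \cite{GrHe22a}) and $d_x<1$ (where the \emph{entire} radial integral is crudely bounded by $\frac{2\Lambda}{s}(1-s)d_x^{-s}$ via $d_{x+r\theta}^s/|r|^{1+2s}\le |r|^{-1-s}$, integrable at infinity since $s>0$), whereas you split the radial integral itself at $|r|=1$ and compare the far piece directly to $\nu_s^\star$ via $d_{x+r\theta}^s\le\diam(\Omega)^s$ and $|r|^{-1-2s}\le 2^{1+2s}(1+|r|)^{-1-2s}$. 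Your version is more self-contained and its two pieces mirror the two summands in $\tau_s$ one-to-one, which makes the confinement of the singular term $\frac{1-s}{d_x^s}$ to $\Omega^1\cap\supp\nu_s^\star$ especially transparent; the paper's version is shorter because the $|r|>1$ tail gets absorbed into the same $d_x^{-s}$ bound and the remaining case is delegated to a reference. Both yield $C=C(\Omega,\Lambda)$ with the $1/s$ factor entering in exactly the same spot.
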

\begin{proof}
	For $x\in \overline{\Omega}^c$ such that $d_x\ge 1$ the claim follows from a calculation analogous to \cite[Lemma 2.4, equation (2.8)]{GrHe22a}. In particular, \cite[equation (2.8)]{GrHe22a} shows that $-N_s(d^s)(x)=0$ if $x\notin \supp \nu_s^\star$. Thus, it is sufficient to consider $x\in \overline{\Omega}^c\cap \supp \nu_s^\star$ satisfying $d_x<1$. In this case we calculate:
	\begin{align}\label{eq:normal_derivative_test_function}
		\begin{split}
			-N_s(d^s)(x)&\le (1-s)\int\limits_{\R^d}d_{x+h}^s \1_{\Omega}(x+h)\nu_s(\d h)\le(1-s) \int\limits_{\abs{r}>d_x}\int\limits_{\mathbb S^{d-1}} \frac{1}{\abs{r}^{1+s}}\mu(\d \theta)\d r\\
			&\le \frac{2 \mu(\mathbb S^{d-1})}{s} (1-s)d_x^{-s} \le \frac{2\Lambda}{s} \tau_s(x).
		\end{split}
	\end{align}	
\end{proof}
The proof of our main result relies on a specific auxiliary function and a careful analysis thereof near the boundary of $\Omega$. This is done in the following lemma. 

\begin{lemma}\label{lem:phi_test_function}
	Let $\Omega \subset \R^d$ be a $C^{1,\alpha}$-domain, $\alpha \in (0,1)$ and $s_\star \in (0,1)$. For any $s\in (s_\star, 1)$ there exists a classical solution $\phi \in C^{s}(\R^d)\cap C_{\loc}^{3s}(\Omega)$ to the problem 
	\begin{align}\label{eq:phi_test_function}
	\begin{split}
	A_s \phi &= 1 \quad \text{in }\Omega,\\
	\phi &= 0 \quad \text{on }\Omega^c.
	\end{split}
	\end{align}
	Furthermore, there exists a constant $C=C(d,\Omega, \alpha, s_\star, \lambda,\Lambda)\ge 1$ such that 
	\begin{equation}\label{eq:boundary_behavior_test_function}
	C^{-1} d_x^s \le \phi(x)\le C d_x^s \quad \text{for all $x\in \overline{\Omega}$}.
	\end{equation}
\end{lemma}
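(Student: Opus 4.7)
The plan is to obtain $\phi$ by a variational ansatz, upgrade its regularity using the Schauder and boundary estimates available for nondegenerate $2s$-stable operators, and then read off the two-sided bound $\phi \asymp d^s$ from boundary regularity theory together with a maximum principle argument.

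\textbf{Existence.} I would work in the Hilbert space $V_0^s := \{u \in L^2(\R^d) : u \equiv 0 \text{ on } \Omega^c,\ \mathcal{E}_s(u,u) < \infty\}$ equipped with the quadratic form
\begin{equation*}
\mathcal{E}_s(u,v) := \tfrac{1}{2}\iint_{\R^d \times \R^d} (u(x)-u(y))(v(x)-v(y))\, K_s(x-y)\, \d x\, \d y,
\end{equation*}
where $K_s$ denotes the density of $\nu_s$ written out from \eqref{eq:levy_measure}. Passing to Fourier variables, the nondegeneracy condition \eqref{eq:ellip_conditions} gives a lower bound $\mathcal{E}_s(u,u) \ge c(d,\lambda)(1-s)\int_{\R^d} \abs{\xi}^{2s}\abs{\widehat{u}(\xi)}^2\, \d \xi$; combined with the fractional Poincar\'e inequality on the bounded set $\Omega$, this yields coercivity with constants robust in $s \in (s_\star,1)$. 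The linear functional $v \mapsto \int_\Omega v$ is continuous on $V_0^s$, so Lax--Milgram delivers a unique weak solution $\phi \in V_0^s$ of $A_s \phi = 1$ in $\Omega$ with $\phi = 0$ on $\Omega^c$.

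\textbf{Regularity and upper bound.} Since $f \equiv 1 \in L^\infty(\Omega)$ and $\Omega$ is a $C^{1,\alpha}$-domain, I would invoke the boundary regularity results of \cite[Theorem 1.2, Theorem 1.5]{RoSe17}, tailored to the class of $2s$-stable operators satisfying \eqref{eq:ellip_conditions}. They yield $\phi \in C^s(\R^d)$, and in fact $\phi/d^s$ extends continuously to $\overline{\Omega}$ with $\phi/d^s \in C^\alpha(\overline{\Omega})$. Interior Schauder theory (e.g.\ \cite{RoSe16}) applied to $A_s \phi = 1$ produces $\phi \in C^{3s}_{\loc}(\Omega)$, which lies strictly above $C^{2s}$ and thus makes $A_s \phi$ pointwise well-defined, so $\phi$ is a classical solution. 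The upper bound in \eqref{eq:boundary_behavior_test_function} is then immediate: for $x \in \Omega$ and $y \in \partial\Omega$ with $\abs{x-y} = d_x$,
\begin{equation*}
\phi(x) = \phi(x) - \phi(y) \le [\phi]_{C^s(\R^d)}\, d_x^s.
\end{equation*}

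\textbf{Lower bound.} The strong maximum principle for $A_s$ forces $\phi > 0$ on $\Omega$ (since $A_s\phi \equiv 1 > 0$ and $\phi = 0$ on $\Omega^c$). Combined with the continuity of $\phi/d^s$ on $\overline{\Omega}$ from the previous step, the bound $\phi \ge C^{-1} d^s$ will follow once I exclude that $\phi/d^s$ vanishes at some boundary point. To this end I would employ a nonlocal Hopf-type argument: construct an explicit subsolution $\psi$, supported near $\partial\Omega$, with $A_s\psi \le 1$ pointwise and $\psi \ge c\,d^s$ for some $c > 0$ depending only on the listed parameters. The comparison principle applied to $\phi - \psi$ then gives $\phi \ge \psi$ in a tubular neighborhood of $\partial\Omega$, and continuity of $\phi/d^s$ propagates the bound to all of $\overline\Omega$.

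\textbf{Main obstacle.} The technical core is the robustness in $s$. The cited estimates of \cite{RoSe16,RoSe17} are phrased for a fixed $s$, so one must either revisit their proofs while tracking the $(1-s)$ prefactor from \eqref{eq:levy_measure}, or construct the Hopf barrier $\psi$ by hand with constants independent of $s \in (s_\star,1)$ — for instance via a rescaled cutoff of a power function in the inward normal direction, whose Lévy-stable image can be controlled uniformly in $s$ by means of \eqref{eq:ellip_conditions}. Ensuring that the constant $C$ in \eqref{eq:boundary_behavior_test_function} depends only on $d,\Omega,\alpha,s_\star,\lambda,\Lambda$ and not on $s$ itself is the step that will demand the most care.
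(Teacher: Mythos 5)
Your proposal follows essentially the same route as the paper: existence via Lax–Milgram in the variational space, regularity ($C^s(\R^d)\cap C^{3s}_{\loc}(\Omega)$) from the boundary and interior estimates of Ros-Oton–Serra, upper bound from the $C^s$ seminorm, lower bound from a Hopf-type barrier, and an explicit acknowledgement that the delicate point is tracking the $s$-dependence of all constants. The paper simply packages these steps by citing Proposition 2.6.6 and inequality (2.6.3) of the Ros-Oton–Fern\'andez-Real book and then devotes its Appendix (Remark A.1) to exactly the constant-tracking you flag as the ``main obstacle,'' including the barrier construction with a regularized distance function $\mathfrak d_x^s + \mathfrak d_x^{s+\eps'}$; so your outline is sound but stops precisely where the genuine work begins.
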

The function $\phi$ is the expected value of the first exit time from the domain $\Omega$ of the stochastic process that is generated by $A_s$.
\begin{proof}
	The existence of a weak solution $\phi$ follows from Lax-Milgram, see \eg \cite[Proposition 2.11]{GrHe22a}. This together with a nonlocal Gauss-Green formula and the regularity theory, see \cite{RoSe17} and \cite[Proposition 2.6.9]{Ros23}, reveals that $\phi\in C^s(\R^d)\cap C^{3s}_{\loc}(\Omega)$ is a classical solution. The lower bound in \eqref{eq:boundary_behavior_test_function} follows from the Hopf lemma \cite[Proposition 2.6.6]{Ros23} and the upper bound was proved in \cite[(2.6.3)]{Ros23}. The dependencies of the constant $C$ follow from carefully following the constants in the proofs. For the convenience of the reader we provide a meta analysis of the proofs of Proposition 2.6.6 and (2.6.3) from \cite{Ros23} in \autoref{rem:ros_oton_constant}.
\end{proof}

\section{Functional inequalities}\label{sec:functional_inequalities}
In this section, we provide most of the important estimates for the proof of \autoref{th:main_result}. The necessary ingredients to lift the proof from the fractional Laplacian, as described in \autoref{sec:idea_proof}, to general $2s$-stable operators are stated in \autoref{sec:ineq_gen_stable}. In particular, the necessary bound to introduce the carr\'{e} du champ of $A_s$ and gain access to \eqref{eq:main_equation} is done in \autoref{lem:comparison_forms}. The last part of this section concerns a weighted Sobolev inequality which already proves the first inequality in \eqref{eq:main_estimate_theorem}. Our main contribution there is the robustness of the constant. 

\subsection{Inequalities for general $2s$-stable operators}\label{sec:ineq_gen_stable}

\begin{lemma}\label{lem:comparison_forms}
	Let $\Omega\subset \R^d$ be a bounded open set, $s_\star \in (0, 1)$, $\beta>0$ and $\mu$ a finite measure on $\mathbb S^{d-1}$ that satisfies \eqref{eq:ellip_conditions}. There exists a constant $C=C(d,\Omega, s_\star, \lambda, \Lambda)>0$ such that for any $s\in (s_\star, 1)$
	\begin{align}\label{eq:prop_caree_comparison}
		\begin{split}
			&\iint\limits_{\Omega\,\Omega}  \big(d_x\wedge d_y\big)^{\beta} \frac{\abs{u(x)-u(y)}^2}{\abs{x-y}^{d+2s}}\d y \d x \le C\, \int\limits_{\Omega} \frac{\abs{u(x)}^2}{d_x^{2s-\beta}}\d x\\
			&\qquad\qquad+ C\,5^\beta \int\limits_{\Omega}  \int\limits_{\R} \int\limits_{\mathbb S^{d-1}} \big(d_x\wedge d_{x+r\theta}\big)^\beta \frac{\abs{u(x)-u(x+r\theta)}^2}{\abs{r}^{1+2s}}\1_{\Omega}(x+r\theta) \mu(\d \theta)\d r \d x
		\end{split}
	\end{align}
	for any $u:\R^d\to \R$ measurable.
\end{lemma}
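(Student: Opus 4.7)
The plan is to combine the comparability of energy forms between the fractional Laplacian type and the measure $\mu$-type, provided by \cite[Theorem 1.11]{DyKa20}, with a splitting of the integration region $\Omega\times\Omega$ according to whether $|x-y|$ is small or large relative to $d_x\wedge d_y$. This separates a ``near-diagonal'' contribution that is comparable to the $\mu$-form, from a ``far'' contribution that is absorbed into the weighted $L^2$ remainder.

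First I would split the left-hand side of \eqref{eq:prop_caree_comparison} as
\begin{equation*}
  \iint_{\Omega\times\Omega} \!\! = \iint_{\{|x-y|\le \tfrac14(d_x\vee d_y)\}} + \iint_{\{|x-y|>\tfrac14(d_x\vee d_y)\}} .
\end{equation*}
In the far region one has $|x-y|\gtrsim d_x$ (or $d_y$), so using $(u(x)-u(y))^2\le 2u(x)^2+2u(y)^2$, symmetry in $x,y$, and the pointwise bound $(d_x\wedge d_y)^\beta\le d_x^\beta$, the contribution is controlled by
\begin{equation*}
  \int_\Omega u(x)^2\, d_x^{\beta}\int_{|y-x|\ge d_x/4}\frac{\d y}{|x-y|^{d+2s}}\d x
  \;\lesssim\; \frac{1}{s}\int_\Omega \frac{u(x)^2}{d_x^{2s-\beta}}\d x.
\end{equation*}
Since $s\ge s_\star$, this delivers the first term $C\int_\Omega u(x)^2 d_x^{\beta-2s}\d x$ with a constant independent of $s$.

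For the near-diagonal part I would use a Whitney-type covering of $\Omega$ by balls $B_j=B(x_j,\rho_j)$ with $\rho_j\simeq d_{x_j}$ and bounded overlap, together with their enlargements $B_j^\ast := B(x_j,4\rho_j)\subset\Omega$. Every pair $(x,y)$ in the near-diagonal region with $x\in B_j$ then lies in $B_j^\ast$, and on $B_j^\ast$ the weight satisfies $d_{x_j}^\beta \le (d_x\wedge d_y)^\beta \le 5^\beta d_{x_j}^\beta$. Applying \cite[Theorem 1.11]{DyKa20} to a cut-off version of $u$ adapted to $B_j^\ast$ transfers the Gagliardo-type integrand into the $\mu$-based spherical integrand on $B_j^\ast$, with a constant depending only on $d,s_\star,\lambda,\Lambda$. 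Summing over $j$ (using the bounded overlap of the $B_j^\ast$) and reinserting $d_{x_j}^\beta\simeq (d_x\wedge d_{x+r\theta})^\beta$ on each ball produces the stated term $C\,5^\beta R_2$; the factor $5^\beta$ is precisely the loss in passing from $d_{x_j}$ to the supremum of $d_x\wedge d_y$ on $B_j^\ast$.

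The main obstacle is the localization step: \cite[Theorem 1.11]{DyKa20} compares energy forms for functions on $\R^d$, so applying it ball-by-ball introduces cross terms from the cut-offs that must be controlled uniformly as $s\to 1-$. The natural way to handle them is to bound the cut-off cross contributions by $\int_\Omega u(x)^2 d_x^{\beta-2s}\d x$ (hence absorbing them into $R_1$), using the same type of direct integral estimate as in the far region and the robust pointwise bound on the cut-off gradient at scale $\rho_j\simeq d_{x_j}$. Once this is done, combining the near and far contributions yields \eqref{eq:prop_caree_comparison} with a constant $C=C(d,\Omega,s_\star,\lambda,\Lambda)$ that is stable in the limit $s\to 1-$.
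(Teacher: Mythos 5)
Your overall strategy is the same as the paper's: split $\Omega\times\Omega$ into a far region (controlled directly by the weighted $L^2$ term via $\int_{|x-y|\gtrsim d_x}|x-y|^{-d-2s}\,\d y \lesssim s^{-1}d_x^{-2s}$) and a near-diagonal region handled with a Whitney covering plus the comparability result \cite[Theorem~1.11]{DyKa20}. The essential difference is in how you localize that theorem, and here your plan introduces a genuine complication that the paper shows is avoidable. You propose applying \cite[Theorem~1.11]{DyKa20} to cut-off versions of $u$ adapted to each Whitney ball, and you correctly flag that this produces cross terms from the cut-offs that must be controlled uniformly as $s\to 1-$. But the cited theorem, after a scaling argument, already gives a \emph{form-to-form} comparison on an arbitrary ball $B_r(x_0)$, with both sides restricted to $B_r(x_0)$:
\begin{equation*}
\iint_{B_r(x_0)\times B_r(x_0)} \frac{|u(x)-u(y)|^2}{|x-y|^{d+2s}}\,\d y\,\d x
\le c\int_{B_r(x_0)}\int_{\R}\int_{\mathbb S^{d-1}} \frac{|u(x)-u(x+t\theta)|^2}{|t|^{1+2s}}\,\1_{B_r(x_0)}(x+t\theta)\,\mu(\d\theta)\,\d t\,\d x,
\end{equation*}
so no cut-off is needed at all; one applies this directly on each enlarged Whitney ball $B^\star$ and sums using bounded overlap. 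Your cut-off approach can probably be made to work by absorbing the cross terms into $\int_\Omega u^2 d_x^{\beta-2s}$ as you suggest, but this is extra work that the statement of \cite[Theorem~1.11]{DyKa20} lets you sidestep entirely, and the robustness in $s$ of those cut-off cross terms would need to be checked with care. One further small issue: with the standard Whitney convention $2\rho_j\le d_{x_j}\le 4\rho_j$, your enlargement $B_j^\ast=B(x_j,4\rho_j)$ is not guaranteed to lie inside $\Omega$ (since $4\rho_j$ can exceed $d_{x_j}$); the paper uses the enlargement factor $7/4$, which keeps $B^\star\subset\Omega$, and then the inclusion $\{|x-y|<d_x/8,\ x\in B\}\subset B\times B^\star$ follows by a short direct computation. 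Adjusting your enlargement factor and dropping the cut-offs in favor of the ball-to-ball comparison would bring your argument in line with the paper's and eliminate the obstacle you identified.
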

The weighted $L^2$-term on the right-hand side of \eqref{eq:prop_caree_comparison} can be dropped if one proves a robust Poincaré inequality for the energy corresponding to $A_s$. Instead, we treat this term using \autoref{prop:distance_estimate} to receive the $L^2(\Omega^c; \tau_s)$-norm of the exterior data, see \autoref{cor:comparison_forms}. 
\begin{proof}
	We divide the proof into three steps. 
	
	\textit{Step 1:} We prove that there exists a constant $c_1=c_1(d, s_\star)>0$ such that 
	\begin{equation}\label{eq:step_1_energy_comparison}
			\int\limits_{\Omega}  \int\limits_{\Omega\cap B_{d_x/8}(x)^c}  \big(d_x\wedge d_y\big)^\beta \frac{\abs{u(x)-u(y)}^2}{\abs{x-y}^{d+2s}}\d y \d x \le c_1 \int\limits_{\Omega} \frac{\abs{u(x)}^2}{d_x^{2s-\beta}}\d x.
	\end{equation}
	After an application of the triangle inequality $\abs{u(x)-u(y)}^2 \le 2 (\abs{u(x)}^2+\abs{u(y)}^2)$ we are left with two integrals to estimate. The first one is:
	\begin{align*}
			\int\limits_{\Omega} \abs{u(x)}^2 \int\limits_{\Omega\cap B_{d_x/8}(x)^c}  \big(d_x\wedge d_y\big)^\beta \frac{1}{\abs{x-y}^{d+2s}}\d y \d x \le \frac{\omega_{d-1}}{2s}\int\limits_{\Omega}  d_x^\beta \abs{u(x)}^2 \Big( \frac{8}{d_x} \Big)^{2s} \d x.
	\end{align*}
	The second one needs a little bit more care. Let $x,y\in \Omega$ such that $\abs{x-y}>d_x/8$. If $8d_y\le 9d_x$, then $\abs{x-y}>d_x/8\ge d_y/9$. If $8d_y>9d_x$, then $d_y\le \abs{y-z}\le \abs{y-x}+\abs{x-z}$ for any $z\in \partial \Omega$. Thus, $\abs{y-x}\ge d_y-d_x\ge d_y/9$. After this observation, the second term can be treated analogous to the first:
	\begin{align*}
		\int\limits_{\Omega} \abs{u(y)}^2 \int\limits_{\Omega}  \big(d_x\wedge d_y\big)^\beta \frac{\1_{B_{d_x/8}(x)^c}(y)}{\abs{x-y}^{d+2s}}\d x \d y &\le \int\limits_{\Omega} d_y^\beta \abs{u(y)}^2 \int\limits_{\Omega\cap B_{d_y/9}(y)^c} \frac{1}{\abs{x-y}^{d+2s}}\d x \d y\\
		&\le \frac{\omega_{d-1}}{2s} \int\limits_{\Omega} d_y^\beta \abs{u(y)}^2 \Big( \frac{9}{d_y} \Big)^{2s} \d y.
	\end{align*}
	
	\textit{Step 2:} We claim that there exists a constant $c_2=c_2(d, s_\star, \lambda, \Lambda)>0$ such that for any ball $B_r(x_0)\subset \R^d$ we have:
	\begin{align}\label{eq:step_2}
		\int\limits_{B_r(x_0)} \int\limits_{B_r(x_0)} \frac{\abs{u(x)-u(y)}^2}{\abs{x-y}^{d+2s}}\d y \d x \le c_2 \int\limits_{B_{r}(x_0)} \int\limits_{\R} \int\limits_{\mathbb S^{d-1}} \frac{\abs{u(x)-u(x+t\theta)}^2}{\abs{t}^{1+2s}}\1_{B_r(x_0)}(x+t\theta) \mu(\d \theta) \d t \d x.
	\end{align}
	This result follows from \cite[Theorem 1.11]{DyKa20} together with a scaling argument. Note that \eqref{eq:ellip_conditions} implies the nondegeneracy assumption $\lin \supp \mu = \R^d$ in \cite[Theorem 1.11]{DyKa20}. \smallskip
	
	\textit{Step 3:} We localize the problem and apply step 2. Let $\cW(\Omega)=\cW$ be a Whitney covering of the open set $\Omega$ by balls, see \eg  \cite[Theorem 3.2]{CoWe77}. We denote for $B\in \cW$ by $x_B$ the center, by $r_B$ the radius of the ball $B$ and write $B^\star = 7/4\,B$ for the ball with $7/4$-times the radius of $B$ but the same center. There exists a number $N=N(d,\Omega)\in \N$ such that 
	\begin{align}\label{eq:whitney_balls}
		\begin{split}
			\Omega = \bigcup\limits_{B\in \cW} B, \qquad 2 r_B\le  \distw{x_B}{\Omega^c}\le 4 r_B,\\
			\sum\limits_{B\in \cW} \1_{B^\star}(x) \le N \quad \text{for all }x\in \Omega.
		\end{split}
	\end{align}
 \smallskip 
	
	By step 1, it suffices to consider the integral over $(x,y)\in \Omega\times \Omega$ such that $\abs{x-y}<d_x/8$. For such $(x,y)$ and $B\in \cW$ such that $x\in B$ it follows that $\abs{x_B-y}\le \abs{x_B-x}+ \abs{x-y}\le r_B + d_x/8\le r_B + 1/8 ( r_B + \distw{x_B}{\Omega^c})\le (1+5/8)r_B < 7/4\, r_B$. We use the Whitney covering to find:
	\begin{align*}
		\int\limits_{\Omega}  \int\limits_{\Omega\cap B_{d_x/8}(x)}  \big(d_x\wedge d_y\big)^\beta \frac{\abs{u(x)-u(y)}^2}{\abs{x-y}^{d+2s}}\d y \d x &\le \sum\limits_{B\in \cW} \,\int\limits_B  \int\limits_{\Omega\cap B_{d_x/8}(x)}  \big(d_x\wedge d_y\big)^\beta \frac{\abs{u(x)-u(y)}^2}{\abs{x-y}^{d+2s}}\d y \d x \\
		&\le \sum\limits_{B\in \cW} (5r_B)^\beta\,\int\limits_B  \int\limits_{\Omega\cap B_{d_x/8}(x)}  \frac{\abs{u(x)-u(y)}^2}{\abs{x-y}^{d+2s}}\d y \d x\\
		&\le \sum\limits_{B\in \cW} (5r_B)^\beta\,\int\limits_B  \int\limits_{B^\star}  \frac{\abs{u(x)-u(y)}^2}{\abs{x-y}^{d+2s}}\d y \d x.
	\end{align*}
	Using \eqref{eq:step_2} for each ball $B^\star$, we estimate the right-hand side of the previous inequality by:
	\begin{equation*}
		\sum\limits_{B\in \cW} (5r_B)^\beta\iint\limits_{B\,B^\star}  \frac{\abs{u(x)-u(y)}^2}{\abs{x-y}^{d+2s}}\d y \d x\le 5^\beta c_2 \sum\limits_{B\in \cW} r_B^\beta \int\limits_{B^\star} \int\limits_{\R} \int\limits_{\mathbb S^{d-1}} \frac{\abs{u(x)-u(x+t\theta)}^2}{\abs{t}^{1+2s}}\1_{B^\star}(x+t\theta) \mu(\d \theta) \d t \d x.
	\end{equation*}
	Finally, the observation that for $x\in B^\star$ the distance of $x$ to the boundary $\partial \Omega$ can be estimated by $d_x\ge d_{x_B}-r_B\ge r_B$ together with the finite overlap property from \eqref{eq:whitney_balls} completes the proof.
\end{proof}

\smallskip

\begin{proposition}\label{prop:distance_estimate}
	Let $\Omega\subset \R^d$ be a bounded $C^{1,\alpha}$-domain with $\alpha\in (0,1)$, $s_\star\in (0, 1)$ and $\mu$ a finite measure on $\mathbb S^{d-1}$ that satisfies the nondegeneracy assumption \eqref{eq:ellip_conditions}. There exist two constants $C=C(\Omega,\alpha, s_\star, \lambda, \Lambda)>0$, $a=a(s_\star, \lambda, \Lambda)>1$ and a radius $\rho=\rho(\Omega,\alpha, s_\star, \lambda, \Lambda)>0$ such that for any $s\in (s_\star, 1)$
	\begin{align*}
		d_x^{-2s}\le C\int\limits_{\abs{r}<ad_x} \int\limits_{\mathbb S^{d-1}} \frac{\1_{\Omega^c}(x+r\theta)}{\abs{r}^{1+2s}}\mu(\d \theta) \d r \quad \text{for all $x\in \Omega$ such that $d_x\le \rho$.}
	\end{align*}
\end{proposition}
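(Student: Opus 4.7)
The bound is geometric: when $x$ is close to $\partial\Omega$, the nondegeneracy \eqref{eq:ellip_conditions} forces enough $\mu$-mass of directions $\theta$ to point transversally to $\partial\Omega$, and along each such direction the ray from $x$ exits $\Omega$ in a distance comparable to $d_x$. To make this precise, I would pick $z\in\partial\Omega$ with $\abs{x-z}=d_x$, let $\nu$ be the outward unit normal at $z$ (so $x=z-d_x\nu$), and pass to the local $C^{1,\alpha}$-chart at $z$ in which $\nu=e_d$ and $\partial\Omega$ is the graph $\{y_d=\phi_z(y')\}$ with $\phi_z(0)=0,\ \nabla\phi_z(0)=0$, $\norm{\phi_z}_{C^{1,\alpha}}\le L$. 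Taylor's theorem then gives $\abs{\phi_z(y')}\le L\abs{y'}^{1+\alpha}$. For a threshold $\delta\in(0,1)$ to be chosen and any $\theta\in\mathbb S^{d-1}$ with $\abs{\theta\cdot\nu}\ge\delta$, I would show that $x+r\theta\in\Omega^c$ whenever $\abs{r}\in[2d_x/\abs{\theta\cdot\nu},\,3d_x/\abs{\theta\cdot\nu}]$ with $r$ of the sign of $\theta\cdot\nu$; this reduces to the elementary inequality $r(\theta\cdot\nu)-d_x\ge Lr^{1+\alpha}$ and holds as long as $d_x\le\rho$ for some $\rho=\rho(\alpha,L,\delta)>0$ that is independent of $s$. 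Setting $a:=3/\delta$ places these $r$'s inside the range $\abs{r}<ad_x$ of the statement.

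\emph{Radial integration and application of nondegeneracy.} From the geometric exit statement just described, for every $\theta$ with $\abs{\theta\cdot\nu}\ge\delta$ one has
\begin{equation*}
\int\limits_{\abs{r}<ad_x}\frac{\1_{\Omega^c}(x+r\theta)}{\abs{r}^{1+2s}}\,\d r\ \ge\ \frac{2^{-2s}-3^{-2s}}{2s}\cdot\frac{\abs{\theta\cdot\nu}^{2s}}{d_x^{2s}}\ \ge\ \frac{c_0(s_\star)}{d_x^{2s}}\abs{\theta\cdot\nu}^{2s},
\end{equation*}
where $c_0(s_\star):=\inf_{s\in(s_\star,1)}(2^{-2s}-3^{-2s})/(2s)>0$. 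Integrating in $\theta$ and splitting \eqref{eq:ellip_conditions} at the threshold $\delta$,
\begin{equation*}
\lambda\ \le\ \int\limits_{\mathbb S^{d-1}}\abs{\theta\cdot\nu}^{2s}\,\mu(\d\theta)\ \le\ \delta^{2s_\star}\Lambda\ +\ \int\limits_{\abs{\theta\cdot\nu}\ge\delta}\abs{\theta\cdot\nu}^{2s}\,\mu(\d\theta).
\end{equation*}
The choice $\delta:=(\lambda/(2\Lambda))^{1/(2s_\star)}$ forces the tail integral to exceed $\lambda/2$ uniformly for $s\in(s_\star,1)$, and combining this with the previous display yields the claim with $C=2/(c_0\lambda)$.

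\textbf{Main difficulty.} The delicate point is keeping every parameter uniform as $s\to 1-$. Two places require care: the $C^{1,\alpha}$-perturbation estimate in the first paragraph must be run at a fixed scale $r\sim d_x/\delta$ with $\delta$ chosen independently of $s$; and the spherical truncation in the second paragraph must use $\delta^{2s_\star}$ rather than $\delta^{2s}$, otherwise the threshold would collapse as $s\to 1-$ and the prefactor $c_0$ would degenerate. Once $\rho$ and $\delta$ have been calibrated in terms of $s_\star,\lambda,\Lambda,\alpha$ and the $C^{1,\alpha}$-radius of $\Omega$, the dependencies of $C,\,a,\,\rho$ match those in the statement. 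The geometric exit argument and the ensuing one-dimensional integration themselves are elementary and pose no further obstacle.
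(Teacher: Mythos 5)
Your proof is correct, and it reaches the result by a genuinely different decomposition than the paper's. The paper's proof starts from the nondegeneracy integral over the fixed radial shell $r\in[(a-1)d_x,\,ad_x]$ and symmetrizes in $\pm r\theta$: it splits into the set where at least one of $x\pm r\theta$ lies outside $\Omega$ and the complementary set where both lie inside. For the latter, the $C^{1,\alpha}$ geometry at scale $\sim a\,d_x$ forces $\abs{n_x\cdot\theta}\le 2/(a-1)$, so the nondegeneracy factor $\abs{n_x\cdot\theta}^{2s}$ is uniformly small, and that part is absorbed into the left-hand side once $a\sim(\Lambda/\lambda)^{1/(2s_\star)}$. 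Your proof instead fixes a transversality threshold $\delta$ on the sphere first, proves exit of the ray $x+r\theta$ directly for $\abs{\theta\cdot\nu}\ge\delta$ on the \emph{adapted} radial interval $[2d_x/\abs{\theta\cdot\nu},\,3d_x/\abs{\theta\cdot\nu}]$, lower-bounds the radial integral, and finally invokes nondegeneracy (split at $\delta$) to show that the transversal directions carry $\mu$-mass at least $\lambda/2$. Both proofs rest on the same geometric input — the Taylor bound $\abs{\phi_z(y')}\le L\abs{y'}^{1+\alpha}$ after normalizing $\nabla\phi_z(0)=0$, together with staying in the chart for $d_x\le\rho$ — and both calibrate $a\sim(\Lambda/\lambda)^{1/(2s_\star)}$; so they are close in spirit. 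Your version, however, dispenses with the $\pm r\theta$ symmetrization and the absorption step by lower-bounding directly over transversal directions, which is slightly cleaner. Your remark about replacing $\delta^{2s}$ by $\delta^{2s_\star}$ to keep $a$ and $\rho$ uniform in $s$ is exactly what the paper does by exponentiating $8\Lambda/\lambda+1$ by $1/(2s_\star)$ rather than $1/(2s)$ in its choice of $a$.
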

We actually do not need $\Omega$ to be bounded nor connected in this lemma. What we really need is a uniform bound on the $C^{1,\alpha}$-norm of the maps describing the boundary of $\Omega$ locally. For the special case of cylindrical stable processes and when $\Omega$ is a ball this result is proven in \cite[Lemma 2.3]{CHZ23}. Moreover, the result for the fractional Laplacian is straight forward and also true for Lipschitz domains, see e.g.\ \cite{GrKa23}. In contrast, \autoref{prop:distance_estimate} is not true in general Lipschitz domains and general $\mu$ even under the assumption \eqref{eq:ellip_conditions}. This is due to the fact that in a domain with a corner the support of $\nu_s$ translated by $x\in \Omega$ may not hit the boundary close to $x$. An example for this phenomenon is $\Omega= (-2,2)^2 \setminus [0,1)^2\subset \R^2$, $\mu= \delta_{e_1}+\delta_{e_2}$, where $e_i$ is the $i$-th coordinate vector, and $x=(-t,-t)$ for $t\in (0,1/2)$. 

\begin{proof}
	Since $\Omega$ is a bounded $C^{1,\alpha}$-domain, we find a localization radius $r_0>0$ such that for any $z\in \partial \Omega$ there exists a rotation and translation $T_z:\R^d\to \R^d$ with $T_z(z)=0$ and a $C^{1,\alpha}$-continuous map $\phi_z:\R^{d-1}\to \R$ such that $T_z(\Omega\cap B_{r_0}(z))= \{ (y',y_d)\in B_{r_0}(0)\,|\, \phi_z(y')> y_d \}$. Since $\partial \Omega$ is compact, we find a uniform constant $L=L(\Omega)\ge 1$ such that $\norm{\phi}_{C^{1,\alpha}}\le L$. 
	
	\smallskip
	
	We set 
	\begin{equation*}
		a:= \Big( \frac{8 \Lambda}{\lambda}+1 \Big)^{1/(2s_\star)}+1 \qquad\text{and}\qquad \rho:=  \frac{1}{a\,L^{1/\alpha} (a-1)^{1/\alpha}  }\wedge \frac{r_0}{2(a+1)}\wedge  \frac{1}{a}\Big( \frac{r_0}{\sqrt{1+L^2} }\Big)^{1/(1+\alpha)}. 
	\end{equation*}
	Now let $x=(x',x_d)\in \Omega$ such that $d_x<\rho$. Let $z_x\in \partial \Omega$ be a minimizer of $x$ to the boundary $\partial \Omega$. Since $\partial \Omega$ is continuously differentiable, the vector $(z_x-x)/\abs{z_x-x}$ is the outer normal vector in $z_x$, which we call for short $n_x$. Without loss of generality we assume that $T_{z_x}=\Id$ and $x=(0,-d_x)$. From now on we simply write $\phi=\phi_{z_x}$. This is due to $A_s$ being translation invariant and the ellipticity assumption \eqref{eq:ellip_conditions} is invariant under rotations. Since $z_x=0$ minimizes the distance of $x=(0,-d_x)$ to the graph of $\phi$, the gradient of $\phi$ is zero in $z_x$, \ie $\nabla \phi(0)=0$. Otherwise we easily find $y\in \partial \Omega$ which is closer to $x$ than $z_x$ by a Taylor expansion of $\phi$. \smallskip
	
	A simple calculation yields
	\begin{align}\label{eq:calculation_lemma_distance_estimate}
		\begin{split}
			\lambda \,\frac{\big( (a-1)^{-2s}-a^{-2s} \big)}{2s}  d_x^{-2s}&=\lambda\, \int\limits_{(a-1)d_x}^{a d_x} \frac{1}{r^{1+2s}}\d r\le \int\limits_{(a-1)d_x}^{a d_x}\int\limits_{\mathbb S^{d-1}} \frac{\abs{n_x\cdot \theta}^{2s}}{r^{1+2s}}\mu(\d \theta)\,\d r\\
			&\le \int\limits_{(a-1)d_x}^{a d_x}\int\limits_{\mathbb S^{d-1}} \frac{\abs{n_x\cdot \theta}^{2s}}{r^{1+2s}}\big(\1_{\Omega^c}(x+r\theta)\vee \1_{\Omega^c}(x-r\theta)\big)\mu(\d \theta)\,\d r\\
			&\quad +  \int\limits_{(a-1)d_x}^{a d_x}\int\limits_{\mathbb S^{d-1}} \frac{\abs{n_x\cdot \theta}^{2s}}{r^{1+2s}}\big(\1_{\Omega}(x+r\theta)\wedge \1_{\Omega}(x-r\theta)\big)\mu(\d \theta)\,\d r\\
			&=: (\text{I})+ (\text{II}).
		\end{split}
	\end{align}
	We will show that the term $(\text{II})$ is smaller than one half of the left-hand side of \eqref{eq:calculation_lemma_distance_estimate} such that we can absorb the term. To achieve this, we need to find an upper bound of $\abs{n_x\cdot \theta}$ for $\theta=(\theta',\theta_d)\in \mathbb S^{d-1}$ and any $r\in ((a-1)d_x, ad_x)$ such that $x+r\theta$ and $x-r\theta$ are in $\Omega$. We fix $x+r\theta$ with this property. Note that $x+r\theta \in B_{r_0}(z_x)$ since $\rho\le r_0/(2(a+1))$. We distinguish two cases. \smallskip
	
	\textit{Case 1:} If $r\theta_d\ge 0$, then we consider the vector $y$ that lies above $x+r\theta$ on the boundary $\partial \Omega$, \ie $y:= (x'+r\theta', \phi(x'+r\theta'))=(r\theta', \phi(r\theta'))$. By the last term in the choice of $\rho$, the vector $y$ is in $B_{r_0}(z_x)$. Surely, since $x+r\theta$ is in $\Omega$, we have $x_d+r\theta_d \le \phi(r\theta')$. Thus, we estimate:
	\begin{align*}
		\abs{ x+r\theta  - (x'+r\theta', x_d) }&= \abs{r\theta_d}= r\theta_d \le \phi(r\theta')-x_d\le L \abs{r\theta'}^{1+\alpha} + d_x\\
		&\le  \big( L a^\alpha d_x^\alpha + (1-s)^{-1}\big) \, r\le 2/(a-1)\, r.
	\end{align*}
	Here we used that $\phi(0) = \nabla \phi(0)=0$ to estimate $\phi(r\theta')=\abs{\phi(r\theta')-\phi(0) - \nabla\phi(0)\cdot(r\theta')}$ and $d_x<\rho$ in the last inequality. This yields an upper bound of $\abs{n_x\cdot \theta}$ as follows:
	\begin{equation}\label{eq:inner_product_bound}
		\abs{n_x\cdot \Big( \frac{(x+r\theta)-x}{\abs{(x+r\theta)-x}} \Big)}= \abs{\cos(\beta)}= \frac{\abs{ x+r\theta  - (x'+r\theta', x_d) }}{\abs{(x+r\theta)-x}}\le \frac{2}{a-1}.
	\end{equation}
	Here $\beta$ is the angle between $n$ and $(y-x)/\abs{y-x}$, see \autoref{fig:geometry_lemma_A_10}.\smallskip
	
	\textit{Case 2:} If $r\theta_d<0$, then we use the property $x-r\theta\in \Omega$ and apply case 1 to $x+r(-\theta)$. By symmetry, this yields the same bound \eqref{eq:inner_product_bound}. \smallskip
	
	\begin{figure}[!ht]
		\centering
	\begin{tikzpicture}[y=1cm, x=1cm, yscale=0.65,xscale=0.65, inner sep=0pt, outer sep=0pt]

	\path[draw=white, pattern color=gray!100,pattern=north east lines]
	(7.2757,18.5158) .. controls (7.0419, 18.5621) and (6.8020, 18.6454) ..
	(6.5656, 18.6256) .. controls (6.3299, 18.6058) and (6.0885, 18.6656) ..
	(5.8481, 18.6193) .. controls (5.6198, 18.5753) and (5.4004, 18.5361) ..
	(5.1714, 18.4788) .. controls (4.9249, 18.4171) and (4.6696, 18.3568) ..
	(4.4526, 18.2252) .. controls (4.1852, 18.0630) and (3.9225, 17.8910) ..
	(3.6343, 17.7668) .. controls (3.4082, 17.6693) and (3.3842, 17.3596) ..
	(3.3612, 17.1372) .. controls (3.3401, 16.9325) and (3.2143, 16.7447) ..
	(3.2129, 16.5426) .. controls (3.2112, 16.2940) and (3.1903, 16.0478) ..
	(3.1234, 15.8089) .. controls (3.0724, 15.6267) and (3.0143, 15.4265) ..
	(3.3247, 15.4310) .. controls (3.5353, 15.4341) and (3.6745, 15.4304) ..
	(3.8834, 15.4652) .. controls (4.1337, 15.5069) and (4.3868, 15.5405) ..
	(4.6360, 15.5111) .. controls (4.9759, 15.4710) and (5.3161, 15.4251) ..
	(5.6463, 15.3282) .. controls (5.7547, 15.2964) and (5.9084, 15.1483) ..
	(5.8899, 15.3813) .. controls (5.8652, 15.6906) and (5.9140, 15.9983) ..
	(5.9807, 16.2979) .. controls (6.0514, 16.6157) and (6.2045, 16.9069) ..
	(6.3237, 17.2105) .. controls (6.4029, 17.4123) and (6.5422, 17.5764) ..
	(6.6622, 17.7541) .. controls (6.7723, 17.9172) and (6.8945, 18.0692) ..
	(7.0206, 18.2276) .. controls (7.1030, 18.3310) and (7.1959, 18.4125) ..
	(7.2757, 18.5158) -- cycle;

	\path[draw=white, pattern color=gray!100,pattern=north east lines]
	(13.6781,11.9292) .. controls (13.9119, 11.8829) and (14.1517, 11.7997) ..
	(14.3880, 11.8195) .. controls (14.6237, 11.8392) and (14.8650, 11.7795) ..
	(15.1053, 11.8258) .. controls (15.3335, 11.8698) and (15.5529, 11.9088) ..
	(15.7818, 11.9662) .. controls (16.0282, 12.0279) and (16.2827, 12.0894) ..
	(16.5004, 12.2197) .. controls (16.7188, 12.3503) and (16.9599, 12.4413) ..
	(17.1736, 12.5799) .. controls (17.3686, 12.7064) and (17.4196, 12.9677) ..
	(17.5033, 13.1755) .. controls (17.5786, 13.3623) and (17.5446, 13.5663) ..
	(17.6215, 13.7569) .. controls (17.7121, 13.9814) and (17.7784, 14.2283) ..
	(17.7328, 14.4629) .. controls (17.6982, 14.6408) and (17.9065, 15.0537) ..
	(17.7311, 15.0253) .. controls (17.5173, 14.9908) and (17.3597, 14.9840) ..
	(17.1608, 14.9805) .. controls (16.9119, 14.9762) and (16.6672, 14.9748) ..
	(16.4173, 14.9481) .. controls (16.1569, 14.9201) and (15.8944, 15.0312) ..
	(15.6332, 15.0349) .. controls (15.4252, 15.0377) and (15.0843, 15.4497) ..
	(15.0820, 15.2551) .. controls (15.0784, 14.9367) and (15.0205, 14.6241) ..
	(14.9932, 14.3059) .. controls (14.9671, 14.0028) and (14.8715, 13.7055) ..
	(14.7326, 13.4374) .. controls (14.6377, 13.2541) and (14.5502, 13.0714) ..
	(14.4340, 12.9024) .. controls (14.3135, 12.7270) and (14.1979, 12.5636) ..
	(14.0785, 12.3918) .. controls (13.9707, 12.2367) and (13.8171, 12.1063) ..
	(13.7045, 11.9536) .. controls (13.7045, 11.9536) and (13.6782, 11.9292) ..
	(13.6782, 11.9292) .. controls (13.6782, 11.9292) and (13.6781, 11.9292) ..
	(13.6781, 11.9292) -- cycle;

	\path[draw=black,fill=black,line join=bevel,line width=0.102cm] (10.5230,
	16.6453) ellipse (0.1021cm and 0.1015cm);

	\path[draw=black,fill=black,line join=bevel,line width=0.102cm] (4.9940,
	17.6987) ellipse (0.1021cm and 0.1015cm);

	\path[draw=black,fill=black,line join=bevel,line width=0.102cm] (10.5230,
	15.1654) ellipse (0.1021cm and 0.1015cm);

	\path[draw=black,line join=bevel,line width=0.060cm] (10.4684, 15.2596) ellipse
	(4.5948cm and 4.5688cm);

	\path[draw=black,line join=bevel,line width=0.060cm] (10.4353, 15.2995) ellipse
	(7.3350cm and 7.2936cm);

	\path[draw=c595959,fill=c5a5a5a,dash pattern=on 0.040cm off 0.120cm,even odd
	rule,draw opacity=0.993,line join=bevel,line width=0.040cm,miter
	limit=3.80,dash phase=0.027cm] (4.9986,17.6863) -- (10.5269,15.1602);

	\path[draw=c595959,fill=c5a5a5a,even odd rule,draw opacity=0.993,line
	join=bevel,line width=0.000cm,miter limit=3.80,dash phase=0.027cm]
	(4.1,16.9) node[above right] (text7276){$x+r\theta$};

	\path[draw=black,line width=0.081cm] (1.2653,14.9948) .. controls (1.6767,
	15.9072) and (2.3638, 16.7454) .. (3.0981, 17.3623) .. controls (4.4670,
	18.5126) and (6.2830, 19.0298) .. (7.6756, 18.3783) .. controls (8.5009,
	17.9922) and (8.9116, 17.0556) .. (9.8447, 16.7595) .. controls (10.8184,
	16.4506) and (11.8822, 16.7891) .. (12.9106, 16.8511) .. controls (13.7129,
	16.8995) and (14.1049, 16.1447) .. (14.6137, 15.6459) .. controls (15.3576,
	14.9168) and (16.4357, 14.8722) .. (17.4453, 15.0145) .. controls (18.3721,
	15.1451) and (18.6826, 14.2275) .. (19.0728, 13.5551) .. controls (19.2496,
	13.2504) and (19.4871, 13.0385) .. (19.7896, 12.9104);

	\path[draw=black,dash pattern=on 0.324cm off 0.162cm on 0.081cm off 0.162cm,line
	width=0.081cm] (19.7462,15.4688) .. controls (19.3360, 14.5563) and (18.6509,
	13.7181) .. (17.9189, 13.1011) .. controls (16.5542, 11.9508) and (14.7436,
	11.4335) .. (13.3553, 12.0851) .. controls (12.5324, 12.4712) and (12.1229,
	13.4078) .. (11.1927, 13.7039) .. controls (10.2220, 14.0129) and (9.1613,
	13.6744) .. (8.1361, 13.6123) .. controls (7.3361, 13.5639) and (6.9453,
	14.3188) .. (6.4380, 14.8176) .. controls (5.6964, 15.5467) and (4.6215,
	15.5913) .. (3.6151, 15.4491) .. controls (2.6910, 15.3185) and (2.3815,
	16.2361) .. (1.9924, 16.9086) .. controls (1.8161, 17.2133) and (1.5794,
	17.4251) .. (1.2778, 17.5533);

	\path[draw=black,opacity=1,even odd rule,line join=bevel,line width=0.000cm]
	(0.7814,14.4) node[above right] (text428){$\partial \Omega$};

	\path[xscale=0.959793,yscale=1.04189,draw=black,fill=black,opacity=1,even
	odd rule,line join=bevel,line width=0.000cm] (19.4759,12.4) node[above
	right] (text1460){$\Omega$};

	\path[xscale=0.895011,yscale=1.1173,draw=black,fill=black,opacity=1,even odd
	rule,line join=bevel,line width=0.000cm] (21.5480,12.14791) node[above right]
	(text2190){$\Omega^c$};

	\path[fill=black,opacity=1,even odd rule,line join=bevel,line width=0.000cm]
	(0.7820,17.6946) node[above right] (text2478){$\partial \widetilde{\Omega}$};

	\path[draw=c595959,fill=c595959,dash pattern=on 0.040cm off 0.120cm,even odd
	rule,draw opacity=0.993,line join=bevel,line width=0.040cm,dash phase=0.040cm]
	(10.5305,16.6245) -- (10.5278,15.1265);

	\path[draw=black,fill=black,even odd rule,line join=bevel,line
	width=0.000cm,miter limit=3.80] (10.6557,15.8687) node[above right, scale=0.8]
	(text4863){$d_x$};

	\path[draw=c595959,fill=c595959,dash pattern=on 0.040cm off 0.120cm,even odd
	rule,draw opacity=0.993,line join=bevel,line width=0.040cm,miter limit=3.80]
	(10.5147,15.1454) -- (14.2742,17.7854);

	\path[draw=c595959,fill=c595959,dash pattern=on 0.040cm off 0.120cm,even odd
	rule,draw opacity=0.993,line join=bevel,line width=0.040cm,miter limit=3.80]
	(10.5029,15.0900) -- (17.2329,18.0060);

	\path[draw=c595959,fill=c595959,even odd rule,draw opacity=0.993,line
	join=bevel,line width=0.000cm,miter limit=3.80] (15.70,17.480) node[above
	right, rotate= 21.3701, scale =0.8] (text5564){$a d_x$};

	\path[fill=c595959,even odd rule,draw opacity=0.993,line join=bevel,line
	width=0.000cm,miter limit=3.80] (12.6157,16.785) node[above right, rotate= 35.0001, scale =0.8]
	(text5670){$(a-1) d_x$};

	\path[fill=c5a5a5a,even odd rule,draw opacity=0.993,line join=bevel,line
	width=0.000cm,miter limit=3.80] (10.4266,14.680) node[above right]
	(text5674){$x$};

	\path[fill=c5a5a5a,even odd rule,draw opacity=0.993,line join=bevel,line
	width=0.000cm,miter limit=3.80] (10.3535,16.8859) node[above right]
	(text6718){$z_x$};

	\path (10.0223,15.3885)arc(270.200:334.400:0.563040 and 0.532) --
	(10.5301,15.1584) -- cycle;

	\draw (10.5230,
	16) arc [start angle = 98, end angle=149.5, x radius =1, y radius =1];
	
	\path[draw=c595959,even odd rule,line join=bevel,line width=0.000cm,miter
	limit=3.80,dash phase=0.101cm] (10.1659,15.4452) node[above right, scale =0.8]
	(text10402){$\beta$};

	\path[draw=black,fill=black,line join=bevel,line width=0.102cm] (10.5230,
	15.1654) ellipse (0.1021cm and 0.1015cm);
	
	\end{tikzpicture}
	\caption{Geometry close to $\partial \Omega$. Here $\partial \widetilde{\Omega}$ is the surface $\partial \Omega$ reflected with respect to $x$. The grey shaded area visualizes the integration domain of the term $(\text{II})$. The angle $\beta$ is close to $\pi/2$ by the $C^{1,\alpha}$-boundary and the choice of $a$.}
\label{fig:geometry_lemma_A_10}
\end{figure}
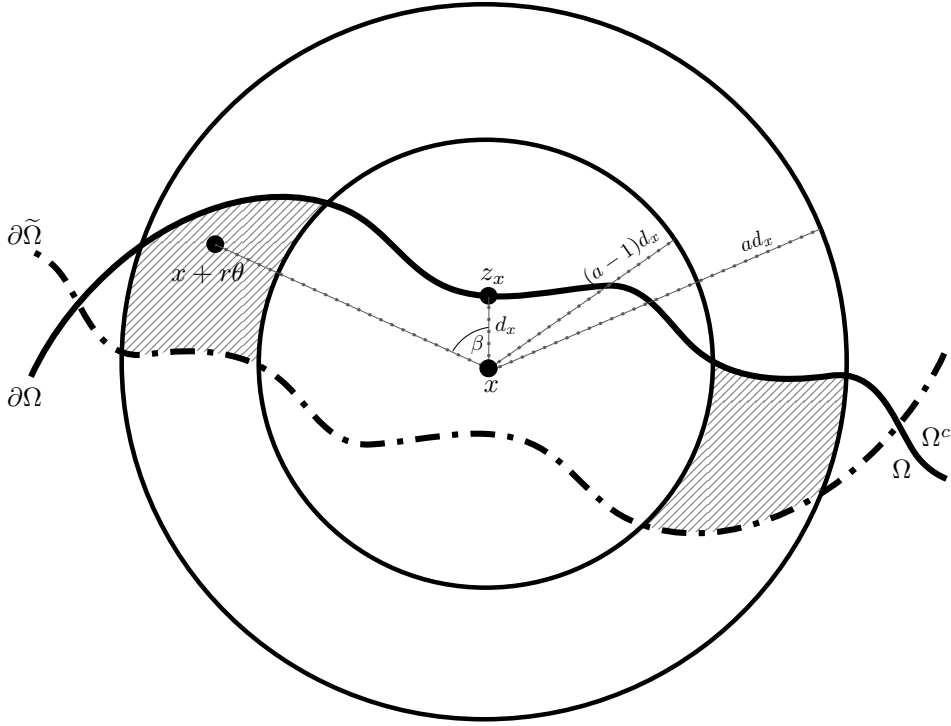
	
	Finally, we insert this into the term $(\text{II})$ to find:
	\begin{align*}
		(\text{II})\le  \mu(\mathbb S^{d-1}) \Big(\frac{ 2 }{a-1 }\Big)^{2s} \int\limits_{(a-1) d_x}^{a d_x} \frac{1}{r^{1+2s}}\d r\le \mu(\mathbb S^{d-1}) \frac{\big((a-1)^{-2s}-a^{-2s}\big)}{2s} \frac{2^2}{(a-1)^{2s}} d_x^{-2s}.
	\end{align*}
	This puts us into the position to absorb the term $(\text{II})$ to the left-hand side of \eqref{eq:calculation_lemma_distance_estimate}: 
	\begin{align*}
		\lambda \,\frac{\big( (a-1)^{-2s}-a^{-2s} \big)}{4s}  d_x^{-2s}\le (\text{I})\le  \int\limits_{\abs{r}<ad_x} \,\int\limits_{\mathbb S^{d-1}} \frac{\1_{\Omega^c}(x+r\theta)}{\abs{r}^{1+2s}}\mu(\d\theta) \, \d r.
	\end{align*}	
\end{proof}

\smallskip 

Recall that $\Gamma_s$ is the carré du champ for the operator $A_s$, see \eqref{eq:carre_du_champ}. 

\begin{corollary}\label{cor:comparison_forms}
	Let $\Omega\subset \R^d$ be a bounded $C^{1,\alpha}$-domain with $\alpha\in (0,1)$ and $s_\star\in (0,1)$. There exists a constant $C=C(d,\Omega, \alpha, s_\star,\lambda, \Lambda)>0$ such that for any $s\in (s_\star,1)$:
	\begin{align*}
		(1-s)\int\limits_{\Omega\,\Omega} \big(d_x\wedge d_y \big)^s \frac{\abs{u(x)-u(y)}^2}{\abs{x-y}^{d+2s}}\d y \d x\le C \Big(\int\limits_{\Omega} d_x^{s} \Gamma_s(u)(x)\d x + (1-s)\norm{u}_{L^2(\Omega)}^2+ \norm{u}_{L^2(\Omega^c; \tau_s)}^2\Big) 
	\end{align*}
	for any $u:\R^d\to \R$ measurable. 
\end{corollary}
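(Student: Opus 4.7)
The strategy is to apply \autoref{lem:comparison_forms} with $\beta = s$ to the left-hand side after multiplying by $(1-s)$; this produces a weighted $L^2$-term $C(1-s)\int_\Omega |u(x)|^2 d_x^{-s}\,\d x$ plus a polar-coordinate energy term. The energy term is handled immediately by bounding $(d_x\wedge d_{x+r\theta})^s\le d_x^s$, dropping $\1_\Omega(x+r\theta)$, and extending the integration to all of $\R\times\mathbb S^{d-1}$, which yields $C\int_\Omega d_x^s\Gamma_s(u)(x)\,\d x$—the first term on the right-hand side of the claim.

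For the weighted $L^2$-term I split $\Omega$ at $d_x = \rho$, with $\rho$ the radius from \autoref{prop:distance_estimate}. On $\{d_x>\rho\}$ the bound $d_x^{-s}\le \rho^{-s}$ gives $C(1-s)\norm{u}_{L^2(\Omega)}^2$. On $\{d_x\le \rho\}$ I write $d_x^{-s} = d_x^s d_x^{-2s}$ and apply \autoref{prop:distance_estimate} to substitute $d_x^{-2s}\le C\int_{|r|<ad_x}\int\frac{\1_{\Omega^c}(x+r\theta)}{|r|^{1+2s}}\mu(\d\theta)\,\d r$. Expanding $|u(x)|^2\le 2|u(x)-u(x+r\theta)|^2+2|u(x+r\theta)|^2$ splits the resulting triple integral: the $|u(x)-u(x+r\theta)|^2$-summand is absorbed into $C\int_\Omega d_x^s\Gamma_s(u)(x)\,\d x$ (drop $\1_{\Omega^c}$ and extend in $(r,\theta)$), leaving the $|u(x+r\theta)|^2$-piece as the only genuinely new ingredient.

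This remaining piece is the crux. Substituting $y=x+r\theta$ and using Fubini rewrites it as $\int_{\Omega^c}|u(y)|^2 J_s(y)\,\d y$, where
\[
J_s(y) := C(1-s)\int_\R\int_{\mathbb S^{d-1}}\frac{d_{y-r\theta}^s}{|r|^{1+2s}}\1_{\{y-r\theta\in\Omega,\,d_{y-r\theta}\le\rho,\,|r|<ad_{y-r\theta}\}}\mu(\d\theta)\,\d r.
\]
For $y\in\Omega^c$ the line segment from $y-r\theta\in\Omega$ to $y$ crosses $\partial\Omega$, so $d_{y-r\theta}+d_y\le |r|$; in particular $d_{y-r\theta}^s\le |r|^s$ and $|r|\ge d_y$. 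The quantitative choice of $\rho$ in \autoref{prop:distance_estimate} moreover forces $a\rho\le 1$, so $|r|<a\rho$ is incompatible with $d_y\ge 1$ and $J_s(y)$ vanishes outside $\Omega^1$. On $\Omega^1$ the computation $\int_{|r|\ge d_y}|r|^{-1-s}\,\d r = 2/(sd_y^s)$ gives $J_s(y)\le C(1-s)/d_y^s$. Finally, whenever $J_s(y)>0$ there is $(r_0,\theta_0)$ in the support of $\d r\,\mu(\d\theta)$ with $y-r_0\theta_0\in\Omega$; by the openness of $\Omega$ and the symmetry of $\mu$ this forces $\nu_s^\star>0$ in a neighborhood of $y$, i.e., $y\in\supp\nu_s^\star$. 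Altogether $J_s(y)\le C\tau_s(y)$ and the piece is bounded by $C\norm{u}_{L^2(\Omega^c;\tau_s)}^2$.

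The main obstacle is this last paragraph: one must simultaneously exploit the segment identity $d_{y-r\theta}+d_y\le|r|$, the quantitative bound $a\rho\le 1$ implicit in \autoref{prop:distance_estimate}, and the $\mu$-symmetric support argument to match $J_s(y)$ against the singular part $(1-s)d_y^{-s}\1_{\Omega^1\cap\supp\nu_s^\star}(y)$ of $\tau_s$. Once this match is in hand, the bulk estimate, the Fubini substitution, and the carré du champ absorption are all routine.
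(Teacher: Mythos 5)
Your proof is correct and follows the paper's own route: apply \autoref{lem:comparison_forms} with $\beta=s$, rewrite $d_x^{-s}=d_x^{s}d_x^{-2s}$ near the boundary via \autoref{prop:distance_estimate}, split $|u(x)|^2$ by adding and subtracting $u(x+r\theta)$, and close the exterior term with the same tail computation as in \autoref{lem:normal_derivative_distance}. The one small imprecision worth flagging is that the step placing $y\in\supp\nu_s^\star$ uses the $r\mapsto-r$ symmetry of the L\'evy measure (the $r$-integral runs over all of $\R$ with an even kernel), not any symmetry of the spherical measure $\mu$.
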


\begin{proof}
	The proof follows from \autoref{lem:comparison_forms} with $\beta =s$, \autoref{prop:distance_estimate} and triangle inequality adding $\pm u(x+r\theta)$ in the term $\norm{u}_{L^2(\Omega;d_x^{-s})}$ with a calculation similar to the proof of \autoref{lem:normal_derivative_distance}. 
\end{proof}

\subsection{A robust weighted Sobolev inequality}\label{sec:weighted_sobolev}

\begin{lemma}\label{lem:weighted_sobolev}
	Let $s_\star\in (0,1)$ and $\Omega\subset \R^d$ be a bounded Lipschitz domain. There exists a constant $C=C(d,\Omega, s_\star)\ge 1$ such that for any $s\in(s_\star, 1)$
	\begin{equation}
		\int\limits_{\Omega}\int\limits_{\Omega} \frac{\abs{u(x)-u(y)}^2}{\abs{x-y}^{d+s}}\d y \d x \le C\,(1-s) \int\limits_{\Omega} \int\limits_{\Omega}(d_x\wedge d_y)^s \frac{\abs{u(x)-u(y)}^2}{\abs{x-y}^{d+2s}}\d y \d x 
	\end{equation}
	for any $u:\Omega \to \R$ such that the right-hand side is finite.
\end{lemma}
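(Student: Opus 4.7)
The plan is to reduce the statement to a comparison of singular integrals at every dyadic scale via a Whitney decomposition of $\Omega$, and to extract the critical factor $(1-s)$ from the different radial integrability of the left- and right-hand kernels.

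\textit{Step 1 (Reformulation).} Using $|x-y|^{-d-s} = |x-y|^s \, |x-y|^{-d-2s}$, the left-hand side equals
\begin{equation*}
I_s := \iint_{\Omega \times \Omega} \frac{|x-y|^s\, |u(x)-u(y)|^2}{|x-y|^{d+2s}}\, dy\, dx,
\end{equation*}
so the target inequality is a comparison between $|x-y|^s$ and $(1-s)(d_x \wedge d_y)^s$, integrated against the common weight $|u(x)-u(y)|^2 |x-y|^{-d-2s}$. This is false pointwise but must hold in the integrated sense.

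\textit{Step 2 (Near/far split).} Fix a Whitney decomposition $\{Q_j\}$ of $\Omega$ with sidelengths $\ell_j \sim \dist(Q_j, \partial \Omega)$. Partition $\Omega \times \Omega$ into
\begin{equation*}
\mathcal{N} := \{(x,y)\in\Omega\times\Omega : |x-y| \le \tfrac{1}{4}(d_x\wedge d_y)\}, \qquad \mathcal{F} := (\Omega\times\Omega)\setminus\mathcal{N}.
\end{equation*}
On $\mathcal{N}$ one has the pointwise bound $|x-y|^s \le 4^{-s}(d_x\wedge d_y)^s$, which trivially gives $I_s^{\mathcal{N}} \le J_s^{\mathcal{N}}$ (where $J_s$ is the $(1-s)$-free version of the right-hand side), but \emph{not} yet the needed factor $(1-s)$.

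\textit{Step 3 (Gaining $(1-s)$ on $\mathcal{N}$).} The gain of $(1-s)$ comes from the fact that the radial integral $\int_0^{R} r^{1-2s}\, dr = R^{2-2s}/(2-2s)$ blows up like $(1-s)^{-1}$ as $s\to 1-$, while the analogous integral with the left-hand kernel, $\int_0^{R} r^{1-s}\, dr = R^{2-s}/(2-s)$, is bounded in $s$. Concretely, writing the inner integrals in polar coordinates around $x$ with $h(x,r) := \int_{\Sd} |u(x)-u(x+r\theta)|^2\, d\sigma(\theta)$, one has
\begin{equation*}
\int_\Omega I_s^{\mathcal{N}}(x)\, dx \;\text{with}\; I_s^{\mathcal{N}}(x) = \int_0^{d_x/4} h(x,r)\, r^{-1-s}\, dr, \qquad J_s^{\mathcal{N}}(x) = d_x^s \int_0^{d_x/4} h(x,r)\, r^{-1-2s}\, dr.
\end{equation*}
After a Poincar\'e reduction on each Whitney cube $Q_j$, both are comparable to $\ell_j^{2-s}$ times a local $L^2$-oscillation of $u$ on $Q_j$, but the right-hand quantity carries the extra factor $(1-s)^{-1}$ coming from the singularity of $r^{-1-2s}$. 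Multiplying the resulting inequality by $(1-s)$ yields $I_s^{\mathcal{N}} \le C (1-s) J_s^{\mathcal{N}}$ with $C = C(d,\Omega,s_\star)$.

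\textit{Step 4 (Off-diagonal region).} For $(x,y)\in\mathcal{F}$, with $x\in Q_j$ and $y\in Q_k$, connect $x$ and $y$ by a chain $x=z_0,z_1,\ldots,z_N=y$ of points in consecutive Whitney-adjacent cubes, with $N \lesssim \log_2(|x-y|/(d_x\wedge d_y))$. Cauchy--Schwarz gives
\begin{equation*}
|u(x)-u(y)|^2 \le N \sum_{\ell=0}^{N-1} |u(z_\ell)-u(z_{\ell+1})|^2,
\end{equation*}
and each consecutive difference is controlled, via a local Poincar\'e estimate, by a near-diagonal piece already handled in Step 3. Because the kernel $|x-y|^{-d-s}$ is less singular than $|x-y|^{-d-2s}$, summation over the chains and dyadic scales of $|x-y|$ converges geometrically and the far contribution is absorbed into $(1-s) J_s$.

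\textit{Main obstacle.} The main difficulty is Step 3: the pointwise comparison on $\mathcal{N}$ only yields the $s$-trivial estimate $I_s^{\mathcal{N}} \le J_s^{\mathcal{N}}$, and the crucial factor $(1-s)$ must be extracted from the precise scaling $1/(2-2s)$ produced by radial integration of the more singular kernel. This is the same normalization that underlies the Bourgain--Brezis--Mironescu theorem and is exactly what forces the constant $C$ to be robust as $s \to 1-$.
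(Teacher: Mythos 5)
Your overall architecture — Whitney decomposition, a near/far split, and extracting $(1-s)$ from the different radial behaviour of $r^{-1-s}$ vs. $r^{-1-2s}$ — matches the paper's strategy, and you have correctly located the mechanism responsible for the $(1-s)$ factor. However, there are two genuine gaps where the sketch does not yet amount to a proof.

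First, your Step 3 is essentially a pointwise heuristic. The claim that $I_s^{\mathcal{N}}$ and $J_s^{\mathcal{N}}$ are "both comparable to $\ell_j^{2-s}$ times a local $L^2$-oscillation" with the second carrying an extra $(1-s)^{-1}$ is what happens if you plug $h(x,r)\approx r^2\abs{\nabla u(x)}^2$, i.e.\ it is the smooth-function scaling. The lemma, however, must hold for arbitrary measurable $u$ with finite right-hand side, and the inequality is definitely false pointwise in $x$: if $h(x,\cdot)$ concentrates at a single small $r_0\ll d_x$, the ratio $I_s^{\mathcal N}(x)/J_s^{\mathcal N}(x)\sim (r_0/d_x)^s$ can be made as small as you like, so no constant of the form $C(1-s)$ relating the two can hold pointwise; it only holds after integrating in $x$. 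The paper's proof makes this integrated statement precise by a three-step chain: a robust Sobolev extension $E:H^s(B_1)\to H^s(\R^d)$ (constant uniform for $s\ge s_\star$), the embedding $H^s(\R^d)\hookrightarrow H^{s/2}(\R^d)$, and the Bourgain--Brezis--Mironescu/Ponce Poincar\'e inequality $\norm{v-\fint v}_{L^2(B_1)}^2\le c_4(1-s)[v]_{H^s(B_1)}^2$, which is exactly where the $(1-s)$ is produced. All three ingredients carry $s$-uniform constants. Your invocation of "Poincar\'e reduction" gestures at this but does not supply it; the step as written would not survive for non-smooth $u$.

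Second, your Step 4 is the riskier omission. The paper does not chain at all: it reduces the whole left-hand side to its near-diagonal part by applying Dyda's comparability theorem from \cite{Dyd06} to the exponent-$s$ kernel before any Whitney-ball estimation, and then spends an appendix remark verifying that Dyda's constant can be made uniform in $s$ over $(s_\star,1)$. Your chaining is morally the same idea (Dyda's proof itself chains through Whitney cubes), but the claim that "summation over the chains and dyadic scales of $\abs{x-y}$ converges geometrically and the far contribution is absorbed into $(1-s)J_s$" is precisely the part that must be quantified: the number of chain links is $N\sim\log(\abs{x-y}/(d_x\wedge d_y))$, the Cauchy--Schwarz step costs a factor $N$, and the geometric-series constant depends on the kernel exponent. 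Without tracking this you do not know that the constant stays bounded as $s\to 1-$; the paper treats this as nontrivial enough to merit \autoref{rem:dyda_robust}. To make your plan into a proof, Step 4 either needs the quantitative chain bookkeeping carried out with the $s$-dependence made explicit, or it should be replaced by a direct citation to \cite{Dyd06} together with a verification of robustness.
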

\begin{proof}
	We divide the proof into three steps. First we prove a localized version of the statement and then pick a Whitney decomposition to prove the statement on $\Omega$. 
	
	\smallskip 
	
	\textit{Step 1:} We show that there exists a constant $c_1=c_1(d,s_\star)\ge 1$ such that for any ball $B_r$ with radius $r>0$ and any function $v \in H^s(\mathbb{R}^d)$ we find
	\begin{equation*}
		\int\limits_{B_r}\int\limits_{B_r} \frac{\abs{v(x)-v(y)}^2}{\abs{x-y}^{d+s}}\d y \d x \le c_1 (1-s) r^s\, \int\limits_{B_r}\int\limits_{B_r} \frac{\abs{v(x)-v(y)}^2}{\abs{x-y}^{d+2s}}\d y \d x.
	\end{equation*}
	Without loss of generality we assume the ball $B_r$ to be centered at the origin by translation invariance. We prove the statement for $r=1$ and then conclude by scaling. Let $E:H^{s}(B_1)\to H^{s}(\R^d)$ be a Sobolev extension operator with the operator norm being bounded by $c_2=c_2(d, s_\star)\ge 1$, see the proofs of \cite[Theorem 5.4, Lemma 5.1, 5.2, 5.3]{DPV12}. The Sobolev embedding $H^s(\R^d)\hookrightarrow H^{s/2}(\R^d)$ yields a constant $c_3=c_3(d, s_\star)\ge 1$ such that 
	\begin{align*}
		\int\limits_{B_1}\int\limits_{B_1} \frac{\abs{v(x)-v(y)}^2}{\abs{x-y}^{d+s}}\d y \d x \le (1-s/2)^{-1} [Ev]_{H^{s/2}(\R^d)}^2 \le c_3 \norm{Ev}_{H^{s}(\R^d)}^2\le c_2 c_3 \norm{v}_{H^{s}(B_1)}^2.
	\end{align*}
	Since the seminorms in the previous inequality are invariant under additive constants, we can apply it to $w:=v-\fint_{B_1}v$ and use a robust Poincaré inequality $\norm{w}_{L^2(B_1)}\le c_4 [v]_{H^s(B_1)}^2$, see \cite{BBM02} and \cite{Pon04}, with a constant $c_4=c_4(d,s_\star)\ge 1$ to find
	\begin{equation}\label{eq:final_step_cubes_size_1}
		\int\limits_{B_1}\int\limits_{B_1} \frac{\abs{v(x)-v(y)}^2}{\abs{x-y}^{d+s}}\d y \d x \le c_2c_3 (1+c_4) (1-s)\int\limits_{B_1}\int\limits_{B_1} \frac{\abs{v(x)-v(y)}^2}{\abs{x-y}^{d+2s}}\d y \d x.
	\end{equation}
	Now, scaling this inequality to balls with radius $r$ yields the claim. 
	
	\smallskip 
	
	\textit{Step 2:} It is proven in \cite{Dyd06} that there exists a constant $c_5=c_5(d,\Omega, s_\star)\ge 1$ such that 
	\begin{equation*}
		\int\limits_{\Omega}\int\limits_{\Omega} \frac{\abs{u(x)-u(y)}}{\abs{x-y}^{d+s}}\d y \d x \le c_5 	\int\limits_{\Omega}\int\limits_{B_{d_x/2^6}(x)} \frac{\abs{u(x)-u(y)}^2}{\abs{x-y}^{d+s}}\d y \d x.
	\end{equation*}
See also \autoref{rem:dyda_robust} for the $s$-dependence of the constant $c_5$.
	
	\smallskip
	
	\textit{Step 3:} Let $\cW$ be a Whitney ball covering of $\Omega$ as in step 3 in the proof of \autoref{lem:comparison_forms}. In particular, recall that for any ball $B\in \cW$ we denote by $r_B$ its radius, by $B^\star$ a slightly bigger ball with radius $(7/4)r_B$ and by $x_B$ its center. These slightly bigger balls satisfy a bounded overlap property with a constant $N\in \N$, see \eqref{eq:whitney_balls}.
	
	\smallskip
	
	Now, we use step 2 and decompose the integral over $\Omega$ into these Whitney balls:
	\begin{align*}
		\int\limits_{\Omega}\int\limits_{\Omega} \frac{\abs{u(x)-u(y)}}{\abs{x-y}^{d+s}}\d y \d x \le \sum\limits_{B\in \cW} 	\, \int\limits_{B}\int\limits_{B_{d_x/8}(x)} \frac{\abs{u(x)-u(y)}}{\abs{x-y}^{d+s}}\d y \d x.
	\end{align*}
	In the beginning of step 3 in the proof of \autoref{lem:comparison_forms} we proved that for any $B\in \cW$, $x\in B$ and $y\in B_{d_x/8}(x)$ both $x$ and, more importantly, $y$ are in the slightly bigger ball $B^\star$. We apply this and step 1 to find:
	\begin{equation*}
		\int\limits_{B}\int\limits_{B_{d_x/8}(x)} \frac{\abs{u(x)-u(y)}}{\abs{x-y}^{d+s}}\d y \d x\le \int\limits_{B^\star}\int\limits_{B^\star} \frac{\abs{u(x)-u(y)}}{\abs{x-y}^{d+s}}\d y \d x\le c_1 (1-s) (7/4)^s\,r_B^s \int\limits_{B^\star} \int\limits_{B^\star} \frac{\abs{u(x)-u(y)}^2}{\abs{x-y}^{d+2s}}\d y \d x.
	\end{equation*}
	By the properties of the Whitney covering, see \eqref{eq:whitney_balls}, we find that $r_B^s\le 4^{s}(d_x\wedge d_y)^s$ for $x,y\in B^\star$. Thus, summing over all balls yields
	\begin{align*}
		\int\limits_{\Omega}\int\limits_{\Omega} \frac{\abs{u(x)-u(y)}}{\abs{x-y}^{d+s}}\d y \d x &\le \sum\limits_{B\in \cW} 	2\,c_1 (1-s) r_B^s \int\limits_{B^\star} \int\limits_{B^\star} \frac{\abs{u(x)-u(y)}^2}{\abs{x-y}^{d+2s}}\d y \d x\\
		&\le 8\,c_1 N (1-s)\int\limits_{\Omega} \int\limits_{\Omega} (d_x\wedge d_y)^s \frac{\abs{u(x)-u(y)}^2}{\abs{x-y}^{d+2s}}\d y \d x.
	\end{align*}
	Here we used the finite overlap property of the family of slightly bigger balls.
\end{proof}

\section{Proof of \autoref{th:main_result}}\label{sec:proof}

We structure the proof as follows: First we show that solutions to \eqref{eq:main_equation} for regular $f,g$ satisfy the bound \eqref{eq:main_estimate_theorem}. This will be done in the following two lemmata. Then we conclude the result by approximating rough $f,g$ as in \autoref{th:main_result} by a sequence of regular functions $f_n, g_n$. 

\begin{lemma}\label{lem:L2_bound}
	Let $\Omega\subset \R^d$ be a bounded $C^{1,\alpha}$-domain, $\alpha \in (0,1)$, $s_\star\in (0,1)$ and $f\in C_c^\infty(\Omega)$, $g\in C_c^\infty(\Omega^c)$. For any $s\in (s_\star,1)$ and any classical solution $u\in C^s(\R^d)\cap C_{\loc}^{3s}(\Omega)$ to \eqref{eq:main_equation} we have
	\begin{equation}\label{eq:L2_bound}
		\norm{u}_{L^2(\Omega)}\le C\, \Big(\norm{f}_{L^2(\Omega; d_x^{2s})}  + \norm{g}_{L^2(\Omega^c; \tau_s)} \Big)
	\end{equation}
	for some constant $C=C(d,\Omega, \alpha, s_\star, \lambda, \Lambda)>0$.
\end{lemma}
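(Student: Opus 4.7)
The plan is to implement the $L^2$-step of the strategy sketched in \autoref{sec:idea_proof}, using the auxiliary function $\phi$ from \autoref{lem:phi_test_function}. Since $A_s\phi \equiv 1$ in $\Omega$, I start with the identity $\norm{u}_{L^2(\Omega)}^2 = \int_\Omega u^2\, A_s\phi\,dx$ and apply the nonlocal Gauss-Green formula. Because $\phi$ vanishes on $\Omega^c$, this yields
\[
\norm{u}_{L^2(\Omega)}^2 \;=\; \int_\Omega \phi\,A_s(u^2)\,dx \;-\; \int_{\Omega^c}(N_s\phi)\,u^2\,dx.
\]
The carré du champ identity $A_s(u^2)=2u\,A_s u - \Gamma_s(u)$ combined with $A_s u = f$ in $\Omega$ and $u = g$ on $\Omega^c$ then gives
\[
\norm{u}_{L^2(\Omega)}^2 \;=\; 2\int_\Omega \phi u f\,dx \;-\; \int_\Omega \phi\,\Gamma_s(u)\,dx \;-\; \int_{\Omega^c}(N_s\phi)\,g^2\,dx.
\]
Since $\phi\ge 0$ and $\Gamma_s(u)\ge 0$, the middle term has a favorable sign and can simply be dropped.

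To bound the two remaining terms, I would invoke the two-sided estimate $\phi \asymp d_x^s$ from \eqref{eq:boundary_behavior_test_function}. For the source term, the upper bound $\phi\le C d_x^s$, followed by Cauchy-Schwarz applied to $\abs{u}\cdot(d_x^s\abs{f})$ and Young's inequality, produces
\[
2\int_\Omega \phi u f\,dx \;\le\; 2C\,\norm{u}_{L^2(\Omega)}\,\norm{f}_{L^2(\Omega;d_x^{2s})} \;\le\; \tfrac12\norm{u}_{L^2(\Omega)}^2 + C'\norm{f}_{L^2(\Omega;d_x^{2s})}^2,
\]
so the resulting quadratic term in $u$ can be absorbed to the left-hand side. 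For the boundary contribution, plugging $\phi\le C d_x^s$ into the definition \eqref{eq:normal_derivative} of $N_s$ on $\Omega^c$ and running exactly the chain of inequalities carried out in \eqref{eq:normal_derivative_test_function} yields $-N_s\phi(x) \le C''\tau_s(x)$, uniformly in $s\in(s_\star,1)$; consequently $\int_{\Omega^c}(-N_s\phi)g^2\,dx \le C''\norm{g}^2_{L^2(\Omega^c;\tau_s)}$. Combining these two bounds with the absorption step delivers \eqref{eq:L2_bound}.

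The main technical point is to rigorously justify the nonlocal Gauss-Green formula that shifts $A_s$ from $\phi$ onto $u^2$: all integrals must be absolutely convergent, and the only surface-type contribution on $\partial\Omega$ must disappear. This is available here because $\phi\in C^s(\R^d)\cap C^{3s}_{\loc}(\Omega)$ vanishes identically on $\Omega^c$, while $u\in C^s(\R^d)\cap C^{3s}_{\loc}(\Omega)$ is globally bounded (as $g\in C_c^\infty(\Omega^c)$ has compact support and $\phi$ is Hölder continuous up to $\partial\Omega$), so that $u^2$ shares the same regularity and the cutoff $\phi$ localizes all $\Omega$-integrals away from infinity. A careful bookkeeping of the constants in \autoref{lem:phi_test_function} and in the pointwise bound on $-N_s\phi$, together with the factor $1/s$ controlled by $1/s_\star$, ensures that the final constant depends only on $(d,\Omega,\alpha,s_\star,\lambda,\Lambda)$, as required for the robust estimate.
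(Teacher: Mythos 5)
Your proposal is correct and follows essentially the same route as the paper: the identity $\norm{u}_{L^2(\Omega)}^2=\int_\Omega u^2 A_s\phi$, nonlocal Gauss--Green, the carr\'e du champ identity to drop the nonnegative $\int_\Omega\phi\,\Gamma_s(u)$ term, Young's inequality to absorb, and \autoref{lem:normal_derivative_distance} to control $-N_s\phi$. Your handling of $-N_s\phi$ (substituting $\phi(x+r\theta)\le C d_{x+r\theta}^s$ directly into \eqref{eq:normal_derivative} for $x\in\Omega^c$ and repeating the computation in \eqref{eq:normal_derivative_test_function}) is in fact slightly cleaner than the paper's shorthand $-N_s\phi\le -c_1 N_s(d^s)$, which is only valid after dropping the $-d_x^s$ term as done inside that computation.
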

\begin{proof}
	Let $\phi$ be the classical solution to the Dirichlet problem $A_s \phi =1$ in $\Omega$, $\phi=0$ on $\Omega^c$ from \autoref{lem:phi_test_function} and the constant $c_1=c_1(d,\Omega, \alpha, s_\star, \lambda,\Lambda)\ge 1$ such that $\phi(x)\le c_1 d_x^s$. In the following calculation, we use a nonlocal integration by parts formula, after which we need to estimate the term $N_s(\phi)(x)$, $x\in \Omega^c$. By \autoref{lem:normal_derivative_distance} we find a constant $c_2=c_2(d,\Omega, \alpha, s_\star, \lambda,\Lambda)\ge 1$ such that $-N_s(\phi)(x)\le -c_1N_s(d_x^s)(x)\le c_2\tau_s(x)$. An application of the nonlocal Gauss-Green formula yields
	\begin{align*}
		\int\limits_{\Omega} \abs{u(x)}^2 \d x &= \int\limits_{\Omega} A_s\phi(x) \abs{u(x)}^2\d x= \int\limits_{\Omega} \phi(x) A_s(u^2)(x)\d x - \int\limits_{\Omega^c} N_s(\phi)(x)\abs{g(x)}^2 \d x\\
		&= 2 \int\limits_{\Omega} \phi (x) u(x) f(x)\d x - \int\limits_{\Omega} \phi (x)\Gamma_s(u)(x)\d x- \int\limits_{\Omega^c} N_s(\phi)(x)\abs{g(x)}^2 \d x.
	\end{align*}
	In the last inequality we used
	\begin{equation*}
			\Gamma_s(u) = -A_s(u^2) + 2uf \quad\text{in $\Omega$}.
	\end{equation*}
	Since $\phi$ and $\Gamma_s(u)$ are nonnegative, the second term is nonpositive. In addition, we use $a\, b \le 1/8 a^2 + 2^{5} b^2$, for any real $a,b$, in the first term and $-N_s(\phi)\le c_2\tau_s$ in the last term to find
	\begin{align*}
		\int\limits_{\Omega} \abs{u(x)}^2 \d x\le \frac{1}{4}\int\limits_{\Omega} \abs{u(x)}^2 \d x + 2^6 \int\limits_{\Omega} \abs{\phi(x)\, f(x)}^2 \d x+ c_2 \int\limits_{\Omega^c} \abs{g(x)}^2 \tau_s(\d x).
	\end{align*}
	Absorbing the $L^2$-norm of $u$ and using the upper distance bound on $\phi$, we find the desired estimate:
	\begin{equation*}
		\int\limits_{\Omega} \abs{u(x)}^2\d x \le c_1^2 \frac{2^{8}}{3} \int\limits_{\Omega} d_x^{2s}\abs{f(x)}^2 \d x+ c_2\frac{4}{3} \int\limits_{\Omega^c} \abs{g(x)}^2 \tau_s(\d x).
	\end{equation*}
\end{proof}

\begin{lemma}\label{lem:main_result}
	Let $\Omega\subset \R^d$ be a bounded $C^{1,\alpha}$-domain, $\alpha \in (0,1)$, $s_\star\in (0,1)$ and $f\in C_c^\infty(\Omega)$, $g\in C_c^\infty(\Omega^c)$. For any $s\in (s_\star,1)$ and any classical solution $u\in C^s(\R^d)\cap C_{\loc}^{3s}(\Omega)$ to \eqref{eq:main_equation} the estimate \eqref{eq:main_estimate_theorem} is satisfied.
\end{lemma}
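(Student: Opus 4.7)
The roadmap chains three pieces. Lemma \ref{lem:weighted_sobolev}, applied to $u$ on $\Omega$ and using $(1-s/2)\le 1$, yields the first inequality in \eqref{eq:main_estimate_theorem}. For the second, Corollary \ref{cor:comparison_forms} reduces the weighted Dirichlet form on $\Omega\times\Omega$ to
\begin{equation*}
C\Big(\int_\Omega d_x^s\,\Gamma_s(u)(x)\,\d x + (1-s)\norm{u}_{L^2(\Omega)}^2 + \norm{u}_{L^2(\Omega^c;\tau_s)}^2\Big).
\end{equation*}
The $L^2(\Omega)$-term is controlled by Lemma \ref{lem:L2_bound}, while $\norm{u}_{L^2(\Omega^c;\tau_s)}^2=\norm{g}_{L^2(\Omega^c;\tau_s)}^2$ because $u=g$ on $\Omega^c$. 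Thus the heart of the proof is a data bound for $\int_\Omega d_x^s\,\Gamma_s(u)\,\d x$.

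For this bound, I use the auxiliary function $\phi$ from Lemma \ref{lem:phi_test_function}: the two-sided estimate $C^{-1} d_x^s\le\phi(x)\le C d_x^s$ allows me to replace $d_x^s$ by $\phi$ up to a constant. Since $u$ is a classical solution of \eqref{eq:main_equation}, the carré du champ identity $\Gamma_s(u)=2uA_s(u)-A_s(u^2)=2uf-A_s(u^2)$ holds pointwise in $\Omega$. Applying the nonlocal Gauss--Green formula to the pair $(\phi,u^2)$, and using $\phi\equiv 0$ on $\Omega^c$ together with $A_s\phi\equiv 1$ in $\Omega$, I obtain
\begin{equation*}
\int_\Omega \phi\,\Gamma_s(u)\,\d x = 2\int_\Omega \phi\, u\, f\,\d x - \int_\Omega u^2\,\d x - \int_{\Omega^c} g^2\, N_s(\phi)\,\d x.
\end{equation*}

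Young's inequality $2\phi u f\le u^2+\phi^2 f^2$ then produces the crucial cancellation: the positive $\int_\Omega u^2$ it generates absorbs the negative $-\int_\Omega u^2$ coming from Gauss--Green, leaving
\begin{equation*}
\int_\Omega \phi\,\Gamma_s(u)\,\d x \le \int_\Omega \phi^2 f^2\,\d x + \int_{\Omega^c} g^2\,(-N_s(\phi))\,\d x.
\end{equation*}
Since $\phi\le C d_x^s$, the first integral is bounded by $C\norm{f}_{L^2(\Omega;d_x^{2s})}^2$. For the second, the same upper bound and the monotonicity of $-N_s$ on nonnegative functions give $-N_s(\phi)\le C(-N_s(d^s))$, and Lemma \ref{lem:normal_derivative_distance} then yields $-N_s(\phi)\le C\tau_s$, producing $C\norm{g}_{L^2(\Omega^c;\tau_s)}^2$. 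This closes the estimate for $\int_\Omega d_x^s\,\Gamma_s(u)\,\d x$ and, via the reductions above, for \eqref{eq:main_estimate_theorem}.

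The main delicacy, foreshadowed in \autoref{sec:idea_proof}, is that on a general $C^{1,\alpha}$-domain one cannot use $d_x^s$ directly as a test function because $A_s(d_x^s)$ has neither a definite sign nor $L^\infty$ bounds. Introducing $\phi$ is the workaround: $A_s\phi\equiv 1$ annihilates the bulk Gauss--Green term after Young, while $-N_s(\phi)$ carries precisely the exterior weight $\tau_s$ needed. Robustness as $s\to 1-$ is built into every auxiliary result invoked, so no new $s$-tracking is required at this stage.
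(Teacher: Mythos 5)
Your proof is correct and follows the same architecture as the paper's proof: Lemma \ref{lem:weighted_sobolev} for the first inequality, Corollary \ref{cor:comparison_forms} to pass to $\int_\Omega d_x^s\Gamma_s(u)\,\d x$, the auxiliary $\phi$ from Lemma \ref{lem:phi_test_function} with its two-sided bound, Gauss--Green, and Lemma \ref{lem:normal_derivative_distance} for $-N_s(\phi)$. The one place you diverge is in handling the cross term $2\int_\Omega\phi\,u\,f$: you apply Young with unit weight so that the resulting $\int_\Omega u^2$ is exactly absorbed by the $-\int_\Omega u^2$ coming from $A_s\phi\equiv1$, whereas the paper simply discards the nonpositive $-\int_\Omega u^2$, bounds the cross term by H\"older, and then invokes Lemma \ref{lem:L2_bound} a second time to control the resulting $\norm{u}_{L^2(\Omega)}$ factor. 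Your cancellation trick is a clean optimization (and mirrors the absorption technique the paper itself uses inside Lemma \ref{lem:L2_bound}), but the two routes are otherwise the same and yield comparable constants; you still need Lemma \ref{lem:L2_bound} anyway for the $\norm{u}_{L^2(\Omega)}^2$ term that appears on its own in \eqref{eq:main_estimate_theorem} and in the output of Corollary \ref{cor:comparison_forms}, which you correctly note.
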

\begin{proof}
By \autoref{lem:L2_bound}, $u$ satisfies the correct $L^2$-bound \eqref{eq:L2_bound}. Thus, it suffices to estimate the seminorm on the left-hand side of \eqref{eq:main_estimate_theorem}. Now let $\phi \in C^s(\mathbb R^d) \cap C^{3s}_{\loc}(\Omega)$ be the classical solution to $A_s\phi =1$ in $\Omega$ and $\phi =0$ on $\Omega^c$ from \autoref{lem:phi_test_function}. Further, let $c_1=c_1(d,\Omega, \alpha, s_\star, \lambda, \Lambda)\ge 1$ such that $c_1^{-1}d_x^s\le \phi(x)\le c_1 d_x^s$. By \autoref{lem:weighted_sobolev} and \autoref{cor:comparison_forms} there exists a constant $c_2=c_2(d,\Omega,\alpha, s_\star, \lambda, \Lambda)\ge 1$ such that
\begin{equation}\label{eq:seminorm_bound_help1}
	[u]_{H^{s/2}(\Omega)}^2  \le c_2  \Big(\int\limits_{\Omega} d_x^{s} \Gamma_s(u)(x)\d x + (1-s)\norm{u}_{L^2(\Omega)}+ \norm{g}_{L^2(\Omega^c; \tau_s)}^2\Big).
\end{equation}
Again, after estimating $d_x^s$ by $\phi$, we use
\begin{equation*}
	\Gamma_s(u) = -A_s(u^2) + 2uf \quad\text{in $\Omega$}.
\end{equation*}
Thus, the first term on the right-hand side of \eqref{eq:seminorm_bound_help1} is bounded by
\begin{equation*}
	-c_1c_2\int_{\Omega} \phi(x) A_s(u^2)(x) dx + 2c_1^2c_2 \int_{\Omega} d_x^s \abs{u(x)f(x)} dx =: (\text{I}) + (\text{II}).
\end{equation*}
Since $u$ and $\phi$ are classical solution with smooth data, we can use the nonlocal Gauss-Green formula to find:
\begin{align*}
	(\text{I}) &= -c_1 c_2\int_{\Omega} \big[A_s\phi\big] (x)\, \abs{u(x)}^2 \d x - c_1c_2\int_{\Omega^c} \abs{g(y)}^2  N_s (\phi)(y) \d y\\
	&= -c_1c_2\int_{\Omega} \abs{u(x)}^2 \d x - c_1c_2\int_{\Omega^c} \abs{g(y)}^2 N_s (\phi)(y) \d y.
\end{align*}
The first term is nonpositive and for the second term we use \autoref{lem:normal_derivative_distance} to estimate $-N_s(\phi)\le c_1c_3 \tau_s$ in $\overline{\Omega}^c$, for a constant $c_3=c_3(\Omega,s_\star, \Lambda )\ge 1$. Hence, 
\begin{equation*}
	(\text{I}) \le c_1^2 c_2 c_3 \norm{g}_{L^2(\Omega;\tau_s)}^2.
\end{equation*}

\smallskip

Now we estimate $(\text{II})$. An application of Hölder's inequality together with \autoref{lem:L2_bound} yields
\begin{equation*}
	(\text{II})\le 2c_1^2 c_2\norm{u}_{L^2(\Omega)} \norm{f}_{L^2(\Omega;d_x^{2s})} \le 2 c_1^2 c_2 c_4 (\norm{g}_{L^2(\Omega^c;\tau_s)} +  \norm{f}_{L^2(\Omega;d_x^{2s})} )\norm{f}_{L^2(\Omega;d_x^{2s})}
\end{equation*} 
with a constant $c_4 =c_4(d,\Omega, \alpha, s_\star, \lambda, \Lambda)>0$.
\end{proof}

\begin{proof}[Proof of \autoref{th:main_result}]
	Let $f\in L^2(\Omega;d_x^{2s})$ and $g \in L^2(\Omega^c;\tau_s)$. By the density of $C_c^\infty$-functions in $L^2$-spaces with Radon measures on $\mathbb R^d$, there exist sequences $\{f_m\}$ and $\{g_m\}$ where $f_m \in C^\infty(\overline{\Omega})$ and $g_m \in C_c^\infty(\Omega^c)$ such that
	\begin{equation*}
		\lim\limits_{m \to \infty}\norm{f_m -f}_{L^2(\Omega;d_x^{2s})} = 0 \text{  and  }\lim\limits_{m\to \infty}\norm{g_m-g}_{L^2(\Omega^c;\tau_s)}=0.
	\end{equation*}
	Let $u_m \in C^s(\mathbb R^d)\cap C_\loc^{3s}(\Omega)$ be the classical solution to \eqref{eq:main_equation} with the data $f_m$ and $g_m$. The existence of such $u_m$ follows as in \autoref{lem:phi_test_function}. By \autoref{lem:L2_bound} and \autoref{lem:main_result}, we find a constant $c_1=c_1(d,\Omega, \alpha, s_\star, \lambda, \Lambda)>0$ such that
	\begin{equation*}
		\norm{u_m}_{H^{s/2}(\Omega)} \le c_1\Big( \norm{f_m}_{L^2(\Omega;d_x^{2s})}+\norm{g_m}_{L^2(\Omega^c;\tau_s)} \Big).
	\end{equation*}
	By the linearity of \eqref{eq:main_equation}, the sequence $\{u_m\}$ is a Cauchy sequence in $H^{s/2}(\Omega)$ and there exists a limit $\tilde{u} \in H^{s/2}(\Omega)$. For every $\phi \in C_c^{\infty}(\Omega)$ it holds that $\abs{\phi(x)} \le \norm{\phi}_{C^s(\overline{\Omega})} d_x^s$. Thus, an application of Hölder's inequality yields
	\begin{equation*}
		(f_m-f,\phi)_{L^2(\Omega)} \le \norm{\phi}_{C^s(\overline{\Omega})} \abs{\Omega}^{1/2} \norm{f_m-f}_{L^2(\Omega;d_x^{2s})} \longrightarrow 0 \quad\text{for $m\to\infty$}.
	\end{equation*} 
	Since $A_s(\phi) \in L^{\infty}(\Omega)$ and $\tilde{u}$ is the $H^{s/2}(\Omega)$-limit of $u_m$, we find
	\begin{equation*}
		\lim\limits_{m \to \infty}(u_m-\tilde{u},A_s(\phi))_{L^2(\Omega)} =0.
	\end{equation*}  
	As in \autoref{lem:normal_derivative_distance}, the operator $A_s$ acting on the test function $\phi$ on $\Omega^c$ is bounded by a multiple of $\tau_s$, \ie $N_s(\phi)(x) \le c_2  \tau_s(x)$ for $x\in \Omega^c$ with $c_2 = c_2(d, \Omega, \alpha, s_\star,\lambda, \Lambda)$. Since $\tau_s$ is a finite measure on $\Omega^c$, we conclude 
	\begin{equation*}
		\lim\limits_{m \to \infty}(g_m-g,A_s(\phi))_{L^2(\Omega^c)} =0.
	\end{equation*}  
	We define $u:= \tilde{u}$ in $\Omega$ and $u:= g$ on $\Omega^c$. Hence, $u$ is a distributional solution to \eqref{eq:main_equation}. Moreover, $\eqref{eq:main_estimate_theorem}$ holds for $u$, as $u_m$ satisfies \eqref{eq:main_estimate_theorem} uniformly. 
	
	\smallskip 
	
	Lastly, we prove the uniqueness of distributional solutions in $H^{s\slash 2}(\Omega)$. Suppose there is another distributional solution $v\in H^{s/2}(\Omega)$. Then $u-v$ is a distributional solution to the problem \eqref{eq:main_equation} with $f=0$ and $g=0$. Since $u-v\in H^{s/2}(\Omega)$, the fractional Hardy inequality, see \cite[Theorem 1.1 (17)]{Dyd04} or \cite[Theorem 2.3]{ChSo03}, yields a constant $c_4$ such that for $\eps>0$
	\begin{equation}\label{eq:condition_maximum}
		\eps^{-s}\int\limits_{\Omega\setminus \Omega_\eps} \abs{u(x)-v(x)}\d x \le c_3 \Big(\int\limits_{\Omega\setminus \Omega_\eps} \frac{\abs{u(x)-v(x)}^2}{d_x^{s}}\d x \Big)^{1/2}\le c_4 \norm{u-v}_{H^{s/2}(\Omega\setminus \Omega_\eps)} \to 0
	\end{equation}
	 as $\eps \to 0+$. Recall that $\Omega_\eps = \{ x\in \Omega\,|\, d_x>\eps \}$. The maximum principle for distributional solutions, see \cite[Theorem 1.1]{GrHe22a}, yields $u=v$. Note that this is applicable as described in \cite[Remark 1.4]{GrHe22a} together with \cite[Proposition 2.6.9]{Ros23}.
\end{proof}

\section{Nonlocal to Local}\label{sec:nonlocaltolocal}

	As an application of the robust regularity result \autoref{th:main_result} and the convergence of trace spaces from \cite{GrHe22b}, we can approximate distributional solutions of the Dirichlet problem for the Laplacian with distributional solutions to \eqref{eq:main_equation} and receive $H^{1/2}(\Omega)$-regularity for solutions to \eqref{eq:dirichlet_laplace} with $L^2$-boundary data. One difficulty in the setup of the local Dirichlet problem with data $g \in L^2(\partial\Omega)$ is the question of how to prescribe the boundary data. The obstruction is that a distributional solution does not have sufficient regularity a priori such that it allows for a description of its boundary values. There are multiple ways of solving this problem. One of them is to describe boundary values by nontangential convergence, cf.\ \cite{HuntWheeden, DahlbergEstimatesHarmonicMeasure, JK81}. Another way is to include the boundary data into the equation by enlarging the test function space.
\begin{definition}
	Let $\Omega \subset \mathbb R^d$ be a bounded Lipschitz domain, $f\in L^1(\Omega; d_x)$ and $g\in L^1(\partial\Omega)$. A function $u\in L^1(\Omega)$ is a very weak solution to
	\begin{align}\label{eq:dirichlet_laplace_with_f}
		\begin{split}
			-\Delta u &= f \quad \text{ in }\Omega,\\
			u&=g \quad \text{ on }\partial \Omega
		\end{split}
	\end{align}
	 if for all $\phi \in C^1(\overline{\Omega})$ such that $\phi = 0$ on $\partial \Omega$, and $\Delta \phi \in L^\infty(\Omega)$ the following holds
	\begin{equation}\label{eq:new_local_solution}
		\int\limits_{\Omega} u (-\Delta) \phi \d x = \int\limits_{\Omega} f \phi \d x - \int\limits_{\partial\Omega} g \partial_n \phi \d \sigma.
	\end{equation}
	We denote the space of test functions by $C_0^{1, \Delta}(\overline{\Omega})$.
\end{definition}
This definition of a solution was used in \cite[Definition 2.3]{veron} and is motivated by the Gauss-Green theorem. Very weak solutions are well-posed, see \cite[Theorem 2.4]{veron}. In \cite[Proposition 20.2]{Po16}, it was shown that, for $g=0$, solutions in the sense of \eqref{eq:new_local_solution} satisfy
\begin{equation}\label{eq:condition_max_principle_0_data}
 \lim\limits_{\varepsilon \to 0+}	\eps^{-1}\int\limits_{\Omega\setminus \Omega_\eps} \abs{u(x)}\d x = 0.
\end{equation}
 Additionally, every function $u \in L^1(\Omega) $ satisfying $(u,-\Delta \phi)_{L^2(\Omega)} =(f, \phi)_{L^2(\Omega)}$ for all $\phi \in C_c^\infty(\Omega)$ and \eqref{eq:condition_max_principle_0_data} is a solution in the sense of \eqref{eq:new_local_solution} for $g=0$.

In analogy to \eqref{eq:new_local_solution} we have the following:

\begin{proposition}\label{prop:new_solution}
	Let $\Omega$ be a bounded $C^{1,\alpha}$-domain with $\alpha \in (0,1)$. Let $s \in (0,1)$, $f \in L^2(\Omega;d_x^{2s})$ and $g \in L^2(\Omega^c;\tau_s)$. Then the solution $u$ from \autoref{th:main_result} satisfies
	\begin{align}\label{def:solution}
		\begin{split}
			\int\limits_{\Omega} u(x)A_s(\phi)(x) \d x &= \int\limits_{\Omega}f(x)\phi(x) \d x - \int\limits_{\Omega^c} g(x) N_s(\phi)(x) \d x \quad\text{for all $\phi \in C_0^{s,A_s}(\overline{\Omega})$}\\
			u&=g \quad\text{on $\Omega^c$.}
		\end{split}
	\end{align}
	With $C_0^{s,A_s}(\overline{\Omega})$ we denote the set of functions $\phi : \mathbb{R}^d \to \mathbb{R}$ such that $\phi \in C^s(\overline{\Omega})$, $\phi = 0$ on $\Omega^c$ and $A_s(\phi) \in L^\infty(\Omega)$. 
\end{proposition}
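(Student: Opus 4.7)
The plan is to approximate the rough data by smooth data, verify \eqref{def:solution} for the corresponding classical solutions by a nonlocal Gauss--Green identity, and then pass to the limit. First I would recycle the approximation from the proof of \autoref{th:main_result}: choose $f_m \in C^\infty(\overline{\Omega})$, $g_m \in C_c^\infty(\Omega^c)$ with $f_m \to f$ in $L^2(\Omega;d_x^{2s})$ and $g_m \to g$ in $L^2(\Omega^c;\tau_s)$, let $u_m \in C^s(\R^d)\cap C^{3s}_{\loc}(\Omega)$ be the associated classical solutions and recall that $u_m \to u$ in $H^{s/2}(\Omega)$ (hence in $L^2(\Omega)$).

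Next I would establish \eqref{def:solution} for each smooth triple $(u_m,f_m,g_m)$. Since $\phi$ vanishes on $\Omega^c$, the symmetric bilinear form
\begin{equation*}
\cE(v,w) = \tfrac{1}{2}\iint\limits_{(\R^d\times\R^d)\setminus(\Omega^c\times\Omega^c)} (v(x)-v(y))(w(x)-w(y))\,\nu_s(x-y)\,\d x\,\d y
\end{equation*}
admits two Fubini-type decompositions: $\cE(u_m,\phi) = \int_\Omega \phi\,A_s u_m\,\d x = \int_\Omega \phi f_m\,\d x$, and by symmetry $\cE(u_m,\phi) = \int_\Omega u_m\,A_s\phi\,\d x + \int_{\Omega^c} g_m\,N_s\phi\,\d x$. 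Subtracting the two expressions produces \eqref{def:solution} for $u_m$. All integrals are absolutely convergent because $u_m \in L^\infty(\R^d)$, $A_s\phi \in L^\infty(\Omega)$, and $|\phi(x)| \le \|\phi\|_{C^s(\overline{\Omega})} d_x^s$, which tames the singular kernel near $\partial\Omega$.

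The final step is to pass $m\to\infty$ in each of the three terms. The left-hand side converges since $A_s\phi \in L^\infty(\Omega)$ and $u_m \to u$ in $L^2(\Omega)$. For the inhomogeneity, the estimate $|\phi(x)| \le \|\phi\|_{C^s(\overline{\Omega})} d_x^s$ and Cauchy--Schwarz give $\big|\int_\Omega (f_m-f)\phi\,\d x\big| \le \|\phi\|_{C^s}|\Omega|^{1/2}\|f_m-f\|_{L^2(\Omega;d_x^{2s})} \to 0$. The step I expect to be the main obstacle is the exterior term, for which I would prove the bound $|N_s\phi(x)| \le C\|\phi\|_{C^s}\tau_s(x)$ on $\overline{\Omega}^c$. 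This follows by repeating the computation of \autoref{lem:normal_derivative_distance} with $d^s$ replaced by $\phi$: whenever $x \in \Omega^c$ and $x+r\theta \in \Omega$, the segment from $x$ to $x+r\theta$ crosses $\partial\Omega$, so $d_{x+r\theta}\le|r|$ and hence $|\phi(x+r\theta)| \le \|\phi\|_{C^s}|r|^s$; inserting this into the defining integral of $N_s\phi$ reproduces the chain of inequalities in \eqref{eq:normal_derivative_test_function} verbatim. Since $\tau_s$ is a finite measure on $\Omega^c$, Cauchy--Schwarz yields $\big|\int_{\Omega^c}(g_m-g)N_s\phi\,\d x\big| \to 0$. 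The identity $u=g$ on $\Omega^c$ is part of the construction in \autoref{th:main_result}, completing the proof.
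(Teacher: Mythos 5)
Your proof is correct and follows the same route as the paper: approximate the data by smooth functions, apply the nonlocal Gauss--Green identity to the classical solutions $u_m$, and pass to the limit using the pointwise bounds $|\phi(x)|\le \|\phi\|_{C^s(\overline\Omega)}\,d_x^s$ and $|N_s\phi|\le C\|\phi\|_{C^s}\tau_s$. The only differences are cosmetic: you make explicit the symmetric bilinear form and Fubini decomposition behind the phrase ``use Gauss--Green'', and you derive $|\phi|\lesssim d_x^s$ directly from $C^s$-H\"older continuity and $\phi|_{\partial\Omega}=0$ rather than invoking boundary regularity as the paper does---a small but welcome simplification.
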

\begin{proof}
	As in the proof of \autoref{th:main_result}, we choose approximating sequences $f_m\in C_c^\infty(\Omega)$ and $g_m\in C_c^\infty(\Omega^c)$ for $f$ and $g$, and $u_m$ be the corresponding solutions to \eqref{eq:main_equation}. Since $u_m \in C^s(\mathbb R^d)\cap C^{3s}_{\loc}(\Omega)$, we can use Gauss-Green to verify \eqref{def:solution} for $u_m$. 
	
	Thus, it suffices to show that every integral converges for $m\to\infty$. Let $\phi\in C_0^{s, A_s}(\Omega)$. The convergence
	\begin{equation*}
		\int\limits_\Omega u_m(x) A_s(\phi)(x) \d x \to \int\limits_\Omega u(x) A_s(\phi)(x) \d x
	\end{equation*}
	follows, as $u_m \to u$ in $L^2(\Omega)$ and $A_s(\phi) \in L^\infty(\Omega)$.
	
	For the inhomogeneity we use that $\phi \in C^s(\overline{\Omega})$ and $0$ outside of $\Omega$ to get
	\begin{equation*}
		\int\limits_{\Omega} |f(x) - f_m(x)| |\phi(x)| \d x \leq \norm{\phi}_{C^s(\overline{\Omega})} \int\limits_{\Omega} |f(x) - f_m(x)| d_x^s \d x \leq \norm{\phi}_{C^s(\overline{\Omega})} |\Omega|^{1\slash 2}\norm{f - f_m}_{L^2(\Omega; d_x^{2s})}.
	\end{equation*}
	
	The exterior integral is similar. Note that $\phi$ is a classical solution with $A_s\phi \in L^\infty(\Omega)$ and $\phi = 0$ on $\Omega^c$. Hence, there is a constant $c_1 > 0$ such that $|\phi(x)| \leq c_1 d_x^s$ for $x\in \Omega$. In particular, by \autoref{lem:normal_derivative_distance}, we have $|N_s(\phi)| \leq c_2 \tau_s$ on $\Omega^c$, for some $c_2 > 0$. 
\end{proof}

\begin{theorem}\label{th:nonlocalTolocal}
	Let $\Omega$ be a bounded smooth domain, $g \in L^2(\partial\Omega)$ and $f \in L^2(\Omega;d_x^2)$. There exist a sequence $s_n \in (0,1)$, converging to $1$, $g_n \in L^2(\Omega^c;\tau_{s_n})$, $f_n \in L^2(\Omega; d_x^{2s_n})$ and distributional solutions $u_n \in H^{s/2}(\Omega)$ to
	\begin{align*}
		(-\Delta)^{s_n} u_n &= f_n \quad\text{in }\Omega, \\
		u_n &= g_n \quad\text{on }\Omega^c,
	\end{align*}  
	satisfying \eqref{eq:main_estimate_theorem}. The sequence $u_n$ converges almost everywhere and in $L^2(\Omega)$ to the unique very weak solution $u\in L^1(\Omega)$ of \eqref{eq:dirichlet_laplace_with_f} with data $f$ and $g$ in the sense of \eqref{eq:new_local_solution}. Moreover, $u\in H^{1\slash 2}(\Omega)$ and there exists a constant $C=C(d,\Omega)$ such that
	\begin{equation}\label{eq:local_main_estimate_theorem}
		\norm{u}_{H^{1/2}(\Omega)} \le   C \Big( \norm{f}_{L^2(\Omega;d_x^{2})}+ \norm{g}_{L^2(\partial\Omega)} \Big).
	\end{equation}
\end{theorem}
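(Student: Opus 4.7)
The plan is to use \autoref{th:main_result} together with the robustness of its estimate and the compactness of fractional Sobolev embeddings to extract a limit that solves the local Dirichlet problem. First, I construct approximating data. For $g \in L^2(\partial\Omega)$, the robust trace and extension results from \cite{GrHe22b} provide $g_n \in L^2(\Omega^c; \tau_{s_n})$ with $\norm{g_n}_{L^2(\Omega^c; \tau_{s_n})} \to \norm{g}_{L^2(\partial\Omega)}$ as $s_n \to 1$. For $f \in L^2(\Omega; d_x^2)$, a mild truncation of the support away from $\partial\Omega$ (with the cut-off scale tied to $1-s_n$) yields $f_n \in L^2(\Omega; d_x^{2s_n})$ with $\limsup_n \norm{f_n}_{L^2(\Omega; d_x^{2s_n})} \le \norm{f}_{L^2(\Omega; d_x^2)}$ and $\int_\Omega f_n\, \psi \d x \to \int_\Omega f\, \psi \d x$ for every $\psi \in C^1(\overline{\Omega})$ with $\psi = 0$ on $\partial\Omega$. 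Applying \autoref{th:main_result} then produces unique distributional solutions $u_n \in H^{s_n/2}(\Omega)$ satisfying \eqref{eq:main_estimate_theorem} with a constant $C$ uniform for $s_n \in (s_\star, 1)$.

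Second, combining the robust estimate with \autoref{lem:weighted_sobolev} shows that
\begin{equation*}
\iint\limits_{\Omega\,\Omega} \frac{\abs{u_n(x)-u_n(y)}^2}{\abs{x-y}^{d+s_n}} \d y \d x
\end{equation*}
is uniformly bounded in $n$. For any fixed $\sigma \in (s_\star/2, 1/2)$, once $s_n > 2\sigma$ this quantity dominates the Gagliardo seminorm $[u_n]_{H^\sigma(\Omega)}$ up to a factor $\diam(\Omega)^{s_n-2\sigma}$. Hence $\{u_n\}$ is bounded in $H^\sigma(\Omega)$ and the compact embedding $H^\sigma(\Omega) \hookrightarrow L^2(\Omega)$ furnishes a subsequence, still denoted $u_n$, converging in $L^2(\Omega)$ and almost everywhere to some $u$. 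Fatou's lemma applied to $\abs{u_n(x)-u_n(y)}^2/\abs{x-y}^{d+s_n} \to \abs{u(x)-u(y)}^2/\abs{x-y}^{d+1}$ then yields $u \in H^{1/2}(\Omega)$ together with the estimate \eqref{eq:local_main_estimate_theorem}.

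Third, to identify $u$ as the unique very weak solution of \eqref{eq:dirichlet_laplace_with_f}, I use a duality argument with $s_n$-dependent test functions. For each $\psi \in C_c^\infty(\Omega)$, let $\phi_n$ be the classical solution of $A_{s_n}(\phi_n) = \psi$ in $\Omega$, $\phi_n = 0$ on $\Omega^c$, as given by \autoref{lem:phi_test_function}, and let $\phi$ denote the solution of $-\Delta \phi = \psi$ in $\Omega$, $\phi = 0$ on $\partial\Omega$. Testing \autoref{prop:new_solution} against $\phi_n$ gives
\begin{equation*}
\int_\Omega u_n\, \psi \d x = \int_\Omega f_n\, \phi_n \d x - \int_{\Omega^c} g_n\, N_{s_n}(\phi_n) \d x.
\end{equation*}
The left-hand side converges to $\int_\Omega u\, \psi \d x$ by $L^2$-convergence of $u_n$. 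On the right, the nonlocal-to-local convergence $\phi_n \to \phi$ (cf.\ \cite{FKV20, Fog20, FK22}) together with the boundary regularity from \cite{RoSe17} drives the equality to the local formulation \eqref{eq:new_local_solution}. Uniqueness of the very weak solution \cite[Theorem 2.4]{veron} identifies $u$ and promotes subsequential convergence to convergence of the full sequence.

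The hard part is the limit of the exterior coupling $\int_{\Omega^c} g_n\, N_{s_n}(\phi_n) \d x \to \int_{\partial\Omega} g\, \partial_n \phi \d \sigma$. As $s_n \to 1$, $N_{s_n}(\phi_n)$ concentrates as an approximate surface measure on $\partial\Omega$ weighted by the normal derivative of $\phi$, while $g_n$ spreads from $\partial\Omega$ into $\Omega^c$ on the scale set by $\tau_{s_n}$. Matching these two concentration scales is essentially a robust version of a nonlocal Gauss--Green formula in the limit $s_n \to 1$, and will require sharp control of $\phi_n$ up to the boundary (via \cite{RoSe17} and \autoref{lem:phi_test_function}) as well as a careful analysis of the extension $g \mapsto g_n$ building on \cite{GrHe22b, GrKa23}.
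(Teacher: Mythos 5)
Your first two steps (constructing approximating data, extracting an $L^2$ and a.e.\ convergent subsequence, using Fatou for $u\in H^{1/2}(\Omega)$ and the estimate \eqref{eq:local_main_estimate_theorem}) follow the same general plan as the paper, though two details are softer than they should be: the trace convergence $\norm{g_n}_{L^2(\Omega^c;\tau_{s_n})}\to\norm{g}_{L^2(\partial\Omega)}$ from \cite{GrHe22b} requires $g\in H^{1/2}(\partial\Omega)$ and a fixed $H^1$ extension, so a preliminary reduction from $L^2(\partial\Omega)$ to $H^{1/2}(\partial\Omega)$ is needed (the paper isolates this as Step~2); and the paper simply takes $f_s:=f\,d_x^{1-s}$ so that $\norm{f_s}_{L^2(\Omega;d_x^{2s})}=\norm{f}_{L^2(\Omega;d_x^2)}$ exactly, which is cleaner than a support truncation, though your construction also works.

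The genuine gap is in the identification step, and it is one you yourself flag as ``the hard part'' without resolving it. Your duality scheme uses $s_n$\emph{-dependent} test functions $\phi_n$ solving $A_{s_n}\phi_n=\psi$ in $\Omega$, $\phi_n=0$ on $\Omega^c$, and then needs the simultaneous limits $\int_\Omega f_n\phi_n\to\int_\Omega f\phi$ and $\int_{\Omega^c} g_n\,N_{s_n}(\phi_n)\to\int_{\partial\Omega} g\,\partial_n\phi$. The second limit has three moving pieces: the exterior extension $g_n$, the nonlocal solution $\phi_n$, and the operator $N_{s_n}$. To make it rigorous you would need a nonlocal-to-local convergence theorem for the solutions $\phi_n\to\phi$ together with \emph{uniform} boundary control of $\phi_n/d_x^{s_n}$ and $N_{s_n}(\phi_n)$ in $s_n$, which is a nontrivial result in itself and is not provided by the references you cite in the form you need. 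The paper sidesteps all of this by testing against a \emph{fixed} $\phi\in C_0^2(\overline{\Omega})$: the identity from \autoref{prop:new_solution} becomes $\int_\Omega u_n\,A_{s_n}(\phi)=\int_\Omega f_n\phi-\int_{\Omega^c}g_n\,N_{s_n}(\phi)$, where $A_{s_n}(\phi)\to-\Delta\phi$ for a fixed smooth $\phi$ is classical, and the sole delicate limit $\int_{\Omega^c}Eg\,N_{s_n}(\phi)\,\d x\to\int_{\partial\Omega}g\,\partial_n\phi\,\d\sigma$ is precisely \cite[Lemma 5.75]{Fog20}. A final density step --- the paper's Step~1, showing $C_0^2(\overline{\Omega})$ is dense in $C_0^{1,\Delta}(\overline{\Omega})$ for the relevant norm --- then yields the identity \eqref{eq:new_local_solution} for all admissible test functions. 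You should replace your $s_n$-dependent test functions by this fixed-test-function argument; as it stands the crux of the identification is not proved.
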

\begin{proof}
	We just sketch the proof.
	
	\textit{Step 1:} We equip the space of test-functions $C^{1,\Delta}_0(\overline{\Omega})$ with the norm
	\begin{equation*}
		\norm{\Delta \phi}_{L^2(\Omega)} + \norm{\phi}_{L^2(\Omega;d_x^2)} + \norm{\partial_n \phi}_{L^2(\partial\Omega)}.
	\end{equation*} 
	Then $C^2_0(\overline{\Omega})= \{v \in C^2(\overline{\Omega}) \mid v = 0 \text{ on } \partial\Omega \}$ is dense in $C^{1,\Delta}_0(\overline{\Omega})$. \smallskip
	
	\textit{Step 2:} We reduce the claim to $g\in H^{1/2}(\partial\Omega)$ by approximating $g\in L^2(\partial\Omega)$ with a sequence of elements $g_n \in H^{1/2}(\partial\Omega)$. If the solutions $u_n$ satisfy the bound
	\begin{equation*}
		\norm{u_n}_{H^{1/2}(\Omega)} \le c (\norm{f}_{L^2(\Omega;d_x^2)} + \norm{g_n}_{L^2(\partial\Omega)}),
	\end{equation*} 
	then the limit $u$ satisfies the same bound with $g_n$ replaced by $g$. \smallskip
	
	\textit{Step 3:} For $g \in H^{1/2}(\partial\Omega)$ there exists an extension $Eg \in H^1(\mathbb{R}^d)$. We set $f_s := f d_x^{1-s} \in L^2(\Omega;d_x^{2s})$ and $g_s := Eg\vert_{\Omega^c} \in L^2(\Omega^c;\tau_s)$. By \autoref{th:main_result} and \autoref{prop:new_solution}, there exist distributional solutions $u_s \in H^{s/2}(\Omega)$ to \eqref{eq:main_equation} with data $f_s$ and $g_s$ satisfying \eqref{def:solution} and
	\begin{equation*}
		\norm{u_s}_{H^{s/2}(\Omega)} \le c \Big(\norm{f_s}_{L^2(\Omega;d_x^{2s})}+\norm{g_s}_{L^2(\Omega^c;\tau_s)}\Big).
	\end{equation*} 
	Since $H^{s_\star/2}(\Omega)$ is compactly embedded into $L^2(\Omega)$, we find $u \in L^2(\Omega)$ and a sequence $s_n \in (0,1)$, converging to $1$, such that $u_n := u_{s_n} \to  u$ as $n \to \infty$ almost everywhere and in $L^2(\Omega)$. Fatou's Lemma implies
	\begin{align*}
		\norm{u}_{H^{1/2}(\Omega)} &\le  \lim\limits_{n\to \infty}\norm{u_n}_{H^{s_n/2}(\Omega)} \le c  \lim\limits_{n\to \infty} \Big( \norm{f_{s_n}}_{L^2(\Omega;d_x^{2s_n})} +  \norm{g_{s_n}}_{L^2(\Omega^c;\tau_{s_n})} \Big)\\ 
		&= c \Big( \norm{f}_{L^2(\Omega;d_x^{2})}+ \norm{g}_{L^2(\partial\Omega)} \Big).
	\end{align*}
	Here we used that $\norm{f_s}_{L^2(\Omega; d_x^{2s})} = \norm{f}_{L^2(\Omega, d_x^2)}$ and \cite[Theorem 1.4]{GrHe22b}, implying
	\begin{equation*}
		\lim\limits_{s \to 1-} \norm{Eg}_{L^2(\Omega^c;\tau_s)}  =\norm{g}_{L^2(\partial\Omega)}.
	\end{equation*}
	It is left to show that $u$ is a solution to \eqref{eq:dirichlet_laplace_with_f} in the sense of \eqref{eq:new_local_solution}. For a function $\phi \in C^2_0(\overline{\Omega})$ it follows from \cite[Lemma 5.75]{Fog20} that
	\begin{equation*}
		\lim\limits_{s\to 1-} \int\limits_{\Omega^c} Eg N_s(\phi) \d x = \int\limits_{\partial\Omega} g \partial_n \phi \d \sigma
	\end{equation*} 
	This and step 1 yield that $u $ is a solution to \eqref{eq:dirichlet_laplace_with_f} and satisfies \eqref{eq:local_main_estimate_theorem}.
\end{proof}

\appendix 
\section{Meta Analysis of the Dependencies of Constants}\label{sec:appendix}

\begin{remark}\label{rem:ros_oton_constant}
	Fix a $C^{1, \alpha}$-domain $\Omega$ and let $u$ be a classical solution to the exterior data problem
	\begin{align*}
			A_s u &= 1 \quad \text{ in $\Omega$}, \\
			\hspace{12pt}	u &= 0 \quad\text{ on $\Omega^c$}.
	\end{align*}
	It was proved in \cite{Ros23} that there exist constants $c, C>0$ such that
	\begin{equation}\label{eq:appendix}
		c d_x^s \leq u(x) \leq C d_x^s \quad\text{for every $x\in\Omega$}.
	\end{equation}
	We will now argue that these constants $c, C>0$ can be chosen depending on $s$ only by a lower bound $0<s_\star \leq s < 1$. 
	\smallskip 
	
	In the following, every Proposition and Lemma refers to \cite{Ros23}. In particular, most of the constants will be named after their corresponding Proposition\slash Lemma in \cite{Ros23}. Also it is important to note that our definition of $A_s$ has a factor $1-s$, whereas in \cite[(2.1.15)]{Ros23} the factor $c_s$ is neither specified nor used.
	
	Except for the proof of the Hopf-Lemma, \ie the lower bound in \eqref{eq:appendix}, the results which we cite in this section are contained in the article \cite{RoSe17}. But for a consistent numbering of the constants we use the notation and numbering of theorems from the book \cite{Ros23}.
	
	The Proposition 2.6.6 (Hopf Lemma) implies the lower bound and within the proof of Proposition 2.6.4 the author proves the upper bound. Let us only discuss the lower bound done in Proposition 2.6.6., as the upper one has similar steps. The constant $c$ is given by $c_\ast \cdot c_{B.2.7} \cdot \kappa$, where
	\begin{equation*}
		c_\ast = \min \{ u(x) \mid x\in \Omega_\delta\},
	\end{equation*}
	and $c_{B.2.7}$, $0<\delta < 1$ are taken from Corollary B.2.7, and $\kappa > 0$ is a constant coming from geometric arguments and can therefore be chosen independent of $s$. Recall that $\Omega_\delta= \{ y\in \Omega \mid d_y>\delta \}$.
	
	By a comparison principle argument one can show that $c_\ast$ depends only on $\delta$ and $s_\star$ by using, for example, the following barrier function at $x_0\in \Omega_\delta$
	\begin{equation*}
		v(x) = K^{-1}\max\left\{\left(\frac{\delta}{2} \right)^2 - |x-x_0|^2, -2\delta \right\},
	\end{equation*}
	where $K$ is a  constant depending only on $\delta$ and $s_\star$ such that $A_s v \leq 1$ in $B_{\delta\slash 2}(x_0)$. This implies $u \ge v$ in $B_{\delta\slash 2}(x_0)$. A similar barrier function has been used in \cite[Lemma 2.3.10]{Ros23}.
	
	Let us discuss $c_{B.2.7}$ and $\delta$. In Corollary B.2.7 the author shows for any $\varepsilon \in(0, \alpha s)$ the existence of a function $\varphi \in C(\mathbb R^n) \cap C^\infty(\Omega)\cap H^s(\mathbb R^n)$ satisfying
	\begin{equation}
		\begin{cases}
			A_s\varphi \leq - d_x^{\epsilon - s} &\text{ in $\{0 < d_x < \delta\}$}, \\
			d_x^s \leq \varphi \leq C_{B.2.7} d_x^s &\text{ in $\Omega$}, \\
			\varphi = 0 &\text{ in $\Omega^c$},
		\end{cases}
	\end{equation}
	for some $C_{B.2.7}>1$ and $0 < \delta < 1$. Here $C_{B.2.7} = c_{B.2.7}^{-1}$. For the definition of $\varphi$ one first chooses a regularized distance function $\mathfrak{d}_x$, satisfying the bounds $d_x \leq \mathfrak d_x\leq C_Rd_x$ for some constant $C_R\geq 1$, see \cite[Definition 2.7.5]{Ros23}. Thereafter, the function $\phi$ is chosen as $\mathfrak{d}_x^s+\mathfrak{d}_x^{s+\eps'}$ for some $\eps'<\eps<\alpha s$. For our purpose, fix $\varepsilon = \alpha s_\star\slash 2$ and $\varepsilon' = \varepsilon\slash 2$. Going trough the proof one sees that we can choose
	\begin{equation*}
		C_{B.2.7} = C_R + C_R^2(\diam \Omega +1).
	\end{equation*} 
	For $\delta \in (0, 1)$ it is enough to choose it such that
	\begin{equation*}
		\delta^{\varepsilon - \varepsilon'} \leq \frac{c_{B.2.5}C_R^{\varepsilon' - 1}}{C_{B.2.1} + C_{B.2.5} +1}.
	\end{equation*}
	
	The constant $C_{B.2.1}$ depends on geometric quantities coming from the regularity of $\mathfrak d_x$, $\Lambda$, $d$ and $s_\star$, as one proves 
	\begin{equation*}
		|A_s \mathfrak d_x^s| \leq C_{B.2.1}\mathfrak d_x^{\alpha s - s} \text{ in } \Omega.
	\end{equation*}
	The dependencies of $C_{B.2.1}$ are straightforward to check by carefully following the proof in \cite[Proposition B.2.1]{Ros23}. It is only in the end of the proof of Proposition B.2.1 (i), where the author estimates $|A_s\mathfrak d_x^s |$ by decomposing into several integrals, that the author produces a factor of $1\slash (1-s)$. Note that this bears no problem as we normalize the operator $A_s$ with the factor $1-s$ as mentioned in the beginning of the remark.
	
	The constants $c_{B.2.5}$ and $C_{B.2.5}$ appear as one proves for $x_0\in \Omega$
	\begin{equation*}
		A_s \mathfrak d_x^{s+\varepsilon} \leq -c_{B.2.5}\mathfrak d_x^{\varepsilon - s} + C_{B.2.5} \quad\text{in $\Omega\cap B_1(x_0)$}.
	\end{equation*}
	Here $C_{B.2.5}$ only depends on the $C^{1, \alpha}$-regularity of $\mathfrak d_x$ and $c_{B.2.5}$ essentially depends on the constants from Lemma B.1.6 which in turn depend on Lemma B.1.5. Let us explain the constant $c_{B.1.6}$. In Lemma B.1.5 one relates $A_su$, for functions $u:\mathbb R^d\to\mathbb R$ such that $u(x) = \bar u(x\cdot e)$, $\bar u:\mathbb R\to\mathbb R$, $e\in \mathbb S^{d-1}$, by
	\begin{equation*}
		A_s u(x) = c_{B.1.5} (-\Delta)^s_{\mathbb R} \bar u(x\cdot e). 
	\end{equation*}
	In Lemma B.1.6 one chooses $\bar u(t) = t_+^{s+\varepsilon'}$ and, by a scaling argument, deduces
	\begin{equation*}
		(-\Delta)^s_{\mathbb R} \bar u(x\cdot e) = (-\Delta)^s_{\mathbb R} \bar u( 1) (x\cdot e)_+^{s+\varepsilon'} \quad\text{on $\{x\cdot e > 0\}$}.
	\end{equation*}
	Then $c_{B.1.6} := - c_{B.1.5}(-\Delta)^s_{\mathbb R} \bar u( 1)$. In Lemma B.1.6 it is shown that $ (-\Delta)^s_{\mathbb R} \bar u( 1)$  is strictly negative, and therefore $c_{B.1.6}> 0$. Both constants $c_{B.1.5}$ and $ (-\Delta)^s_{\mathbb R} \bar u( 1)$ will heavily depend on $s$. However, there are upper and lower bounds that are only depending on $s_\star, \varepsilon', \lambda$ and $\Lambda$. For $c_{B.1.5}$, these bounds are fairly easy to see. For $(-\Delta)^s_{\mathbb R} \bar u(1)$ this problem is more delicate. The idea is, the function $c(s, \varepsilon') =  -(-\Delta)^s_{\mathbb R} \bar u( 1)$ is continuous in $s\in(s_\star, 1)$ for fixed $\varepsilon' < s_\star$. In particular, the author showed that $c(s, \varepsilon') > 0$ for all choices $0 < \varepsilon' < s < 1$. So, it remains to show that $c(s, \varepsilon')$ remains bounded from above and below for $s$ close to 1, more precisely $s \in ( (1-\varepsilon') \vee 3\slash 4, 1)$. Then, one can use the infimum and supremum of $c(s, \varepsilon')$ w.r.t.\  $s\in (s_\star, 1)$ to get bounds independent of $s$. For $s$ close to 1, the idea is to separate \begin{equation*}
		-(-\Delta)^s_{\mathbb R} \bar u( 1)= (1-s)\int\limits_0^\infty \frac{(1+t)_+^{s+\eps'}+(1-t)_+^{s+\eps'}-2}{t^{1+2s}}\d t
	\end{equation*} into two integrals over $(0,1)$ and $(1,\infty)$ and treat them separately. We will call the integration variable in both terms $t$ from now on. Estimating the $(1,\infty)$ integral is easy, as it does not contain the singularity at $t=0$. For $t\in(0,1)$ one can use Taylor's formula to rewrite $(1\pm t)^{s+\varepsilon'}$. This will lead to the factor $s+\varepsilon' - 1$, which explains  the choice of $s > 1-\varepsilon'$. Moreover, one has to take care of the singularity appearing in $t = 1$, as the Taylor expansion yields a term $(1-t)^{s+\varepsilon-2}$. This can be treated by additionally separating the integral into $0<t<1\slash 2$ and $1\slash 2 < t < 1$ to split the singularities. Finally, there will appear a $(1-s)^{-1}$-term, which gets canceled by the $(1-s)$-term from the definition of $(-\Delta)^s_{\mathbb R}$.		
\end{remark}


\begin{remark}\label{rem:dyda_robust}
	Within the proof of \cite[Theorem 1]{Dyd06}, more precisely equation $(13)$, the author proves
	\begin{equation}\label{eq:dyda_constant}
		\int\limits_{\Omega}\int\limits_{\Omega} \frac{\abs{u(x)-u(y)}}{\abs{x-y}^{d+s}}\d y \d x \le C(d, \Omega, \eta, s)	\int\limits_{\Omega}\int\limits_{B_{\eta d_x}} \frac{\abs{u(x)-u(y)}^2}{\abs{x-y}^{d+s}}\d y \d x,
	\end{equation}
	for a Lipschitz domain $\Omega$ and any $\eta\in (0, 1)$. We will now discuss that the constant can be chosen depending on $s$ only through a lower bound $0<s_\star<s<1$. The main ingredients are \cite[Lemma 3, Proposition 5]{Dyd06}. In Lemma 3 it is proven that for any $p>0$, $\beta > 1$ there exists a constant $c>0$ such that for every nonnegative sequence $(a_n)_n$ one has
	\begin{equation*}
		\left(\sum_{k=1}^\infty a_k \right) \leq c\sum_{k=1}^\infty \beta^k a_k^p.
	\end{equation*}
	More precisely, the author shows that for any $m \geq 1$ we have
	\begin{equation*}
		\left(\sum_{k=1}^\infty a_k \right) \leq (2m)^{p-1}\sum_{k=1}^\infty (2^{\frac{p-1}{m}})^k a_k^p.
	\end{equation*}
	Thus, for $\beta >1$ any choice of the integer $m$ such that
	\begin{equation*}
		m \geq (p-1)\frac{\ln 2}{\ln\beta}
	\end{equation*}
	gives the desired estimate. In the proof of \eqref{eq:dyda_constant}, Lemma 3 is used for $\beta = (1+ \frac{\eta}{M})^s$, where $M>1$ is a number depending only on the Lipschitz constant of $\Omega$. This will lead to the fact that the constant $c$ depends on $s^{-1}$. In the last step of Proposition 5, the constant $C$ depends on $s^{-1}$ from the use of Lemma 3, some geometric quantities and the sum
	\begin{equation*}
		\sum_{k=1}^\infty \left(1-\frac{\eta^2}{M^2	}\right)^{2sk} = \frac{\left(1-\frac{\eta^2}{M^2	}\right)^{2s}}{1-\left(1-\frac{\eta^2}{M^2	}\right)^{2s}} \leq \frac{\left(1-\frac{\eta^2}{M^2	}\right)^{2s_\star}}{1-\left(1-\frac{\eta^2}{M^2	}\right)^{2s_\star}}.
	\end{equation*}
	The remaining steps in the proof of Proposition 5 and \eqref{eq:dyda_constant} are purely geometric and therefore do not lead to a factor depending on $s$.
\end{remark}
	
	\smallskip

\begin{thebibliography}{DNPV12}
	
	\bibitem[BBM01]{BBM01}
	Jean Bourgain, Ha\"{\i}m Brezis, and Petru Mironescu.
	\newblock Another look at {S}obolev spaces.
	\newblock In {\em Optimal control and partial differential equations}, pages
	439--455. IOS, Amsterdam, 2001.
	
	\bibitem[BBM02]{BBM02}
	Jean Bourgain, Ha\"{\i}m Brezis, and Petru Mironescu.
	\newblock Limiting embedding theorems for {$W^{s,p}$} when {$s\uparrow1$} and
	applications.
	\newblock {\em J. Anal. Math.}, 87:77--101, 2002.
	\newblock Dedicated to the memory of Thomas H. Wolff.
	
	\bibitem[BGPR20]{BGPR20}
	Krzysztof Bogdan, Tomasz Grzywny, Katarzyna {Pietruska-Pa{\l}uba}, and Artur
	Rutkowski.
	\newblock Extension and trace for nonlocal operators.
	\newblock {\em J. Math. Pures Appl. (9)}, 137:33--69, 2020.
	
	\bibitem[BK05]{BaKa05}
	Richard~F. Bass and Moritz Kassmann.
	\newblock H\"{o}lder continuity of harmonic functions with respect to operators
	of variable order.
	\newblock {\em Comm. Partial Differential Equations}, 30(7-9):1249--1259, 2005.
	
	\bibitem[Bog97]{Bog97}
	Krzysztof Bogdan.
	\newblock The boundary {H}arnack principle for the fractional {L}aplacian.
	\newblock {\em Studia Math.}, 123(1):43--80, 1997.
	
	\bibitem[BS05]{BoSz05}
	Krzysztof Bogdan and Pawe{\l} Sztonyk.
	\newblock Harnack's inequality for stable {L}\'{e}vy processes.
	\newblock {\em Potential Anal.}, 22(2):133--150, 2005.
	
	\bibitem[BS07]{BoSz07}
	Krzysztof Bogdan and Pawe{\l} Sztonyk.
	\newblock Estimates of the potential kernel and {H}arnack's inequality for the
	anisotropic fractional {L}aplacian.
	\newblock {\em Studia Math.}, 181(2):101--123, 2007.
	
	\bibitem[Che99]{Che99}
	Zhen-Qing Chen.
	\newblock Multidimensional symmetric stable processes.
	\newblock {\em The Korean Journal of Computational \& Applied Mathematics. An
		International Journal}, 6(2):227--266, 1999.
	
	\bibitem[CHZ23]{CHZ23}
	Zhen-Qing {Chen}, Eryan {Hu}, and Guohuan {Zhao}.
	\newblock {Dirichlet heat kernel estimates for cylindrical stable processes}.
	\newblock {\em arXiv e-prints}, page arXiv:2304.14026, April 2023.
	
	\bibitem[CKR22]{CKR22}
	Jae-Hwan {Choi}, Kyeong-Hun {Kim}, and Junhee {Ryu}.
	\newblock {Sobolev regularity theory for the non-local elliptic and parabolic
		equations on $C^{1,1}$ open sets}.
	\newblock {\em arXiv e-prints}, page arXiv:2205.11035, May 2022.
	
	\bibitem[CKW22]{CKW22}
	Jamil Chaker, Minhyun Kim, and Marvin Weidner.
	\newblock Regularity for nonlocal problems with non-standard growth.
	\newblock {\em Calc. Var. Partial Differential Equations}, 61(6):Paper No. 227,
	31, 2022.
	
	\bibitem[Cou65]{Co65}
	Philippe Courr\`ege.
	\newblock Sur la forme int\'egro-diff\'erentielle des op\'erateurs de $c^\infty
	_k$ dans $c$ satisfaisant au principe du maximum.
	\newblock {\em S\'eminaire Brelot-Choquet-Deny. Th\'eorie du potentiel}, 10(1),
	1965.
	
	\bibitem[CS98]{ChSo98}
	Zhen-Qing Chen and Renming Song.
	\newblock Estimates on {G}reen functions and {P}oisson kernels for symmetric
	stable processes.
	\newblock {\em Math. Ann.}, 312(3):465--501, 1998.
	
	\bibitem[CS03]{ChSo03}
	Zhen-Qing Chen and Renming Song.
	\newblock Hardy inequality for censored stable processes.
	\newblock {\em Tohoku Math. J. (2)}, 55(3):439--450, 2003.
	
	\bibitem[CS09]{CaSi09}
	Luis Caffarelli and Luis Silvestre.
	\newblock Regularity theory for fully nonlinear integro-differential equations.
	\newblock {\em Comm. Pure Appl. Math.}, 62(5):597--638, 2009.
	
	\bibitem[CS11a]{CaSi11a}
	Luis Caffarelli and Luis Silvestre.
	\newblock The {E}vans-{K}rylov theorem for nonlocal fully nonlinear equations.
	\newblock {\em Ann. of Math. (2)}, 174(2):1163--1187, 2011.
	
	\bibitem[CS11b]{CaSi11b}
	Luis Caffarelli and Luis Silvestre.
	\newblock Regularity results for nonlocal equations by approximation.
	\newblock {\em Arch. Ration. Mech. Anal.}, 200(1):59--88, 2011.
	
	\bibitem[CT83]{CT83}
	J.~Chabrowski and H.~B. Thompson.
	\newblock On the boundary values of the solutions of linear elliptic equations.
	\newblock {\em Bull. Austral. Math. Soc.}, 27(1):1--30, 1983.
	
	\bibitem[CW77]{CoWe77}
	Ronald~R. Coifman and Guido Weiss.
	\newblock Extensions of {H}ardy spaces and their use in analysis.
	\newblock {\em Bull. Amer. Math. Soc.}, 83(4):569--645, 1977.
	
	\bibitem[Dah77]{DahlbergEstimatesHarmonicMeasure}
	Bj\"{o}rn E.~J. Dahlberg.
	\newblock Estimates of harmonic measure.
	\newblock {\em Arch. Rational Mech. Anal.}, 65(3):275--288, 1977.
	
	\bibitem[Dah79]{DahlbergOnThePoissonIntegral}
	Bj\"{o}rn E.~J. Dahlberg.
	\newblock On the {P}oisson integral for {L}ipschitz and {$C\sp{1}$}-domains.
	\newblock {\em Studia Math.}, 66(1):13--24, 1979.
	
	\bibitem[Dah80]{Dah80}
	Bj\"{o}rn E.~J. Dahlberg.
	\newblock Weighted norm inequalities for the {L}usin area integral and the
	nontangential maximal functions for functions harmonic in a {L}ipschitz
	domain.
	\newblock {\em Studia Math.}, 67(3):297--314, 1980.
	
	\bibitem[DGLZ12]{DGLZ12}
	Qiang Du, Max Gunzburger, R.~B. Lehoucq, and Kun Zhou.
	\newblock Analysis and approximation of nonlocal diffusion problems with volume
	constraints.
	\newblock {\em SIAM Rev.}, 54(4):667--696, 2012.
	
	\bibitem[DK19]{DyKa19}
	Bart{\l}omiej Dyda and Moritz Kassmann.
	\newblock Function spaces and extension results for nonlocal {D}irichlet
	problems.
	\newblock {\em J. Funct. Anal.}, 277(11):108134, 22, 2019.
	
	\bibitem[DK20]{DyKa20}
	Bart{\l}omiej Dyda and Moritz Kassmann.
	\newblock Regularity estimates for elliptic nonlocal operators.
	\newblock {\em Anal. PDE}, 13(2):317--370, 2020.
	
	\bibitem[DNPV12]{DPV12}
	Eleonora Di~Nezza, Giampiero Palatucci, and Enrico Valdinoci.
	\newblock Hitchhiker's guide to the fractional {S}obolev spaces.
	\newblock {\em Bull. Sci. Math.}, 136(5):521--573, 2012.
	
	\bibitem[DRSV22]{DRSV22}
	Serena Dipierro, Xavier {Ros-Oton}, Joaquim Serra, and Enrico Valdinoci.
	\newblock Non-symmetric stable operators: regularity theory and integration by
	parts.
	\newblock {\em Adv. Math.}, 401:Paper No. 108321, 100, 2022.
	
	\bibitem[DRV17]{DROV17}
	Serena Dipierro, Xavier {Ros-Oton}, and Enrico Valdinoci.
	\newblock Nonlocal problems with {N}eumann boundary conditions.
	\newblock {\em Rev. Mat. Iberoam.}, 33(2):377--416, 2017.
	
	\bibitem[Dyd04]{Dyd04}
	Bart{\l}omiej Dyda.
	\newblock A fractional order {H}ardy inequality.
	\newblock {\em Illinois J. Math.}, 48(2):575--588, 2004.
	
	\bibitem[Dyd06]{Dyd06}
	Bart{\l}omiej Dyda.
	\newblock On comparability of integral forms.
	\newblock {\em J. Math. Anal. Appl.}, 318(2):564--577, 2006.
	
	\bibitem[FK22]{FK22}
	Guy {Foghem} and Moritz {Kassmann}.
	\newblock {A general framework for nonlocal Neumann problems}.
	\newblock {\em arXiv e-prints}, page arXiv:2204.06793, April 2022.
	
	\bibitem[FKV15]{FKV15}
	Matthieu Felsinger, Moritz Kassmann, and Paul Voigt.
	\newblock The {D}irichlet problem for nonlocal operators.
	\newblock {\em Math. Z.}, 279(3-4):779--809, 2015.
	
	\bibitem[FKV20]{FKV20}
	Guy Foghem, Moritz Kassmann, and Paul Voigt.
	\newblock Mosco convergence of nonlocal to local quadratic forms.
	\newblock {\em Nonlinear Anal.}, 193:111504, 22, 2020.
	
	\bibitem[Fog20]{Fog20}
	Guy Foghem.
	\newblock $l^2$-theory for nonlocal operators on domains.
	\newblock {\em PUB Bielefeld}, page 220, June 2020.
	
	\bibitem[GH22]{GrHe22b}
	Florian {Grube} and Thorben {Hensiek}.
	\newblock {Robust nonlocal trace spaces and Neumann problems}.
	\newblock {\em arXiv e-prints}, page arXiv:2209.04397, September 2022.
	
	\bibitem[GH23]{GrHe22a}
	Florian Grube and Thorben Hensiek.
	\newblock Maximum principle for stable operators.
	\newblock {\em Mathematische Nachrichten}, 2023.
	\newblock to appear.
	
	\bibitem[GK23]{GrKa23}
	Florian {Grube} and Moritz {Kassmann}.
	\newblock {Robust nonlocal trace and extension theorems}.
	\newblock {\em arXiv e-prints}, page arXiv:2305.05735, May 2023.
	
	\bibitem[Gru14]{Gru14}
	Gerd Grubb.
	\newblock Local and nonlocal boundary conditions for {$\mu$}-transmission and
	fractional elliptic pseudodifferential operators.
	\newblock {\em Anal. PDE}, 7(7):1649--1682, 2014.
	
	\bibitem[Gru15]{Gru15}
	Gerd Grubb.
	\newblock Fractional {L}aplacians on domains, a development of
	{H}\"{o}rmander's theory of {$\mu$}-transmission pseudodifferential
	operators.
	\newblock {\em Adv. Math.}, 268:478--528, 2015.
	
	\bibitem[HW68]{HuntWheeden}
	Richard~A. Hunt and Richard~L. Wheeden.
	\newblock On the boundary values of harmonic functions.
	\newblock {\em Trans. Amer. Math. Soc.}, 132:307--322, 1968.
	
	\bibitem[JK81]{JK81}
	David~S. Jerison and Carlos~E. Kenig.
	\newblock The {D}irichlet problem in nonsmooth domains.
	\newblock {\em Ann. of Math. (2)}, 113(2):367--382, 1981.
	
	\bibitem[JK95]{KenigDirichlet}
	David Jerison and Carlos~E. Kenig.
	\newblock The inhomogeneous {D}irichlet problem in {L}ipschitz domains.
	\newblock {\em J. Funct. Anal.}, 130(1):161--219, 1995.
	
	\bibitem[Kas09]{Kas09}
	Moritz Kassmann.
	\newblock A priori estimates for integro-differential operators with measurable
	kernels.
	\newblock {\em Calc. Var. Partial Differential Equations}, 34(1):1--21, 2009.
	
	\bibitem[Ken94]{Ken94}
	Carlos~E. Kenig.
	\newblock {\em Harmonic analysis techniques for second order elliptic boundary
		value problems}, volume~83 of {\em CBMS Regional Conference Series in
		Mathematics}.
	\newblock Published for the Conference Board of the Mathematical Sciences,
	Washington, DC; by the American Mathematical Society, Providence, RI, 1994.
	
	\bibitem[KM17]{KaMi17}
	Moritz Kassmann and Ante Mimica.
	\newblock Intrinsic scaling properties for nonlocal operators.
	\newblock {\em J. Eur. Math. Soc. (JEMS)}, 19(4):983--1011, 2017.
	
	\bibitem[KR23]{KiRy23}
	Kyeong-Hun {Kim} and Junhee {Ryu}.
	\newblock {Weighted Sobolev space theory for non-local elliptic and parabolic
		equations with non-zero exterior condition on $C^{1,1}$ open sets}.
	\newblock {\em arXiv e-prints}, page arXiv:2305.08934, May 2023.
	
	\bibitem[KW22]{KW22}
	Moritz {Kassmann} and Marvin {Weidner}.
	\newblock {Nonlocal operators related to nonsymmetric forms I: H{\"o}lder
		estimates}.
	\newblock {\em arXiv e-prints}, page arXiv:2203.07418, March 2022.
	
	\bibitem[Lie91]{Li91}
	Gary~M. Lieberman.
	\newblock The {D}irichlet problem with {$L^p$} boundary data in domains with
	{D}ini continuous normal.
	\newblock {\em Tsukuba J. Math.}, 15(1):41--56, 1991.
	
	\bibitem[Lot00]{Lot00}
	S.~V. Lototsky.
	\newblock Sobolev spaces with weights in domains and boundary value problems
	for degenerate elliptic equations.
	\newblock {\em Methods Appl. Anal.}, 7(1):195--204, 2000.
	
	\bibitem[Mih76]{Mi76}
	V.~P. Miha\u{\i}lov.
	\newblock The {D}irichlet problem for a second order elliptic equation.
	\newblock {\em Differencial'nye Uravnenija}, 12(10):1877--1891, 1919, 1976.
	
	\bibitem[Pet83]{Pe83}
	I.~M. Petrushko.
	\newblock Boundary values in {${L}_{p},$} {$p>1$}, of solutions of elliptic
	equations in domains with {L}yapunov boundary.
	\newblock {\em Mat. Sb. (N.S.)}, 120(162)(4):569--588, 1983.
	
	\bibitem[Pon04]{Pon04}
	Augusto~C. Ponce.
	\newblock An estimate in the spirit of {P}oincar\'{e}'s inequality.
	\newblock {\em J. Eur. Math. Soc. (JEMS)}, 6(1):1--15, 2004.
	
	\bibitem[Pon16]{Po16}
	Augusto~C. Ponce.
	\newblock {\em Elliptic {PDE}s, measures and capacities}, volume~23 of {\em EMS
		Tracts in Mathematics}.
	\newblock European Mathematical Society (EMS), Z\"{u}rich, 2016.
	\newblock From the Poisson equations to nonlinear Thomas-Fermi problems.
	
	\bibitem[RF23]{Ros23}
	Xavier {Ros-Oton} and Xavier {Fernández-Real}.
	\newblock Integro-differential elliptic equations.
	\newblock \url{https://www.ub.edu/pde/xros/Book-nonlocal-webpage.pdf}, 2023.
	\newblock Accessed: 2023-07-11.
	
	\bibitem[RS14]{RoSe14}
	Xavier {Ros-Oton} and Joaquim Serra.
	\newblock The {D}irichlet problem for the fractional {L}aplacian: regularity up
	to the boundary.
	\newblock {\em J. Math. Pures Appl. (9)}, 101(3):275--302, 2014.
	
	\bibitem[RS16a]{RoSe16}
	Xavier {Ros-Oton} and Joaquim Serra.
	\newblock Boundary regularity for fully nonlinear integro-differential
	equations.
	\newblock {\em Duke Math. J.}, 165(11):2079--2154, 2016.
	
	\bibitem[RS16b]{Ros16}
	Xavier {Ros-Oton} and Joaquim Serra.
	\newblock Regularity theory for general stable operators.
	\newblock {\em J. Differential Equations}, 260(12):8675--8715, 2016.
	
	\bibitem[RS17]{RoSe17}
	Xavier {Ros-Oton} and Joaquim Serra.
	\newblock Boundary regularity estimates for nonlocal elliptic equations in
	{$C^1$} and {$C^{1,\alpha}$} domains.
	\newblock {\em Ann. Mat. Pura Appl. (4)}, 196(5):1637--1668, 2017.
	
	\bibitem[Sat99]{Sat99}
	Ken-iti Sato.
	\newblock {\em L\'{e}vy processes and infinitely divisible distributions},
	volume~68 of {\em Cambridge Studies in Advanced Mathematics}.
	\newblock Cambridge University Press, Cambridge, 1999.
	\newblock Translated from the 1990 Japanese original, Revised by the author.
	
	\bibitem[Ser15]{Ser15}
	Joaquim Serra.
	\newblock {$C^{\sigma+\alpha}$} regularity for concave nonlocal fully nonlinear
	elliptic equations with rough kernels.
	\newblock {\em Calc. Var. Partial Differential Equations}, 54(4):3571--3601,
	2015.
	
	\bibitem[Sil07]{Sil07}
	Luis Silvestre.
	\newblock Regularity of the obstacle problem for a fractional power of the
	{L}aplace operator.
	\newblock {\em Comm. Pure Appl. Math.}, 60(1):67--112, 2007.
	
	\bibitem[Ver04]{veron}
	Laurent Veron.
	\newblock {Elliptic Equations Involving Measures}.
	\newblock In P.~Quittner M.~Chipot, editor, {\em {Stationary Partial
			Differential equations, Volume 1}}, Handbook of Differential Equations, pages
	593--712. {Elsevier}, June 2004.
	
\end{thebibliography}

\end{document}